\newtheorem{theorem}{Theorem}[section]
\newtheorem{prop}[theorem]{Proposition}
\newtheorem{lemma}[theorem]{Lemma}
\numberwithin{equation}{section}
\newcommand{\cart}{\mathcal{C}}
\newcommand{\dist}{\mathcal{D}}
\newcommand{\eval}[2]{\left. #1 \right|_{#2}}
\newcommand{\FN}{Fr\"{o}licher--Nijenhius}
\newcommand{\fnb}[1]{[\![ #1 ]\!]}
\newcommand{\force}{\mathcal{F}}
\newcommand{\gh}{\hat{g}}
\newcommand{\horproj}{\mathcal{P}}
\newcommand{\identity}{\mathcal{I}}
\newcommand{\Lie}{\mathcal{L}}
\newcommand{\ombar}{\bar{\omega}}
\newcommand{\omdot}{\dot{\omega}}
\newcommand{\p}{\partial}
\newcommand{\pd}[2]{\frac{\p #1}{\p #2}}
\newcommand{\pdb}[3]{\frac{\p^2 #1}{\p #2 \p #3}}
\newcommand{\pdeform}{\mathcal{G}}
\newcommand{\Phit}{\tilde{\Phi}}
\newcommand{\R}{\mathbb{R}}
\newcommand{\Rt}{\tilde{R}}
\newcommand{\slice}[1]{{}^{(#1)}\!}
\newcommand{\vertproj}{\mathcal{Q}}
\newcommand{\vertprojt}{\tilde{\vertproj}}
\newcommand{\vf}[1]{\frac{\p}{\p #1}}
\newcommand{\vl}{\mathrm{v}}
\newcommand{\yddot}{\ddot{y}}
\newcommand{\ydot}{\dot{y}}
\DeclareMathOperator{\annih}{annih}
\DeclareMathOperator{\curv}{R}
\DeclareMathOperator{\id}{id}
\DeclareMathOperator{\rank}{rank}
\DeclareMathOperator{\Span}{span}
\newcommand{\art}[6]{#1: #2 {\it #3\/} {\bf #4} (#5) #6}
\newcommand{\book}[4]{#1: {\it #2\/} (#3, #4)}
\newcommand{\inbook}[7]{#1: #2 {\it In:\ #3\/}, ed. #4 (#5, #6) #7}
\begin{document}

\begin{frontmatter}

\title{Tangent bundle geometry induced by second order partial differential equations}

\author[djs]{D.J.\,Saunders\corref{cor}}
\ead{david@symplectic.demon.co.uk}

\author[or]{O.\,Rossi\fnref{fnor}}
\ead{olga.rossi@osu.cz}

\author[gep]{G.E.\,Prince\fnref{fngep}}
\ead{g.prince@latrobe.edu.au}

\cortext[cor]{Corresponding author}
\fntext[fnor]{and Department of Mathematics, Ghent University, Belgium}
\fntext[fngep]{and The Australian Mathematical Sciences Institute, c/o The University of Melbourne, Victoria 3010, Australia}

\address[djs]{Department of Mathematics, Faculty of Science, University of Ostrava,\\ 30.\,dubna 22, 701 03 Ostrava, Czech Republic}
\address[or]{Department of Mathematics, Faculty of Science, University of Ostrava,\\ 30.\,dubna 22, 701 03 Ostrava, Czech Republic}
\address[gep]{Department of Mathematics and Statistics, La Trobe University, Victoria 3086, Australia}


\begin{abstract}
We show how the tangent bundle decomposition generated by a system of ordinary differential equations may be generalized to the case of a system of second order PDEs `of connection type'. Whereas for ODEs the decomposition is intrinsic, for PDEs it is necessary to specify a closed $1$-form on the manifold of independent variables, together with a transverse local vector field. The resulting decomposition provides several natural curvature operators. We give three examples to indicate possible applications of this theory.\\[2ex]
\end{abstract}
\begin{keyword}
\MSC[2010] 58J60 \sep 58A20 \sep 35G50 \sep 35N99
\end{keyword}

\end{frontmatter}


\section{Introduction}

One of the important developments in the understanding of systems of second order ordinary differential equations (expressed in solved form) has been the use of an associated `non-linear connection' and its curvature components. For example, this approach has enabled significant advances to be made in understanding the inverse multiplier problem in the calculus of variations, a problem solved in the case of two equations (using analytic techniques) by Douglas in 1941~\cite{Dou41}, but still outstanding where there are three or more equations:\ see~\cite{AnTh92,KrPr08,STP02} for expositions of this approach to the problem. Other problems where the non-linear connection and its linearized version have been of benefit include the question of whether a system of second order ODEs can be decoupled in some system of coordinates, or more generally is `submersive'~\cite{KoTh91}; and whether a congruence of curves given by a system of second order ODEs will become degenerate~\cite{JePr2000}. Our aim in this paper is to apply the same approach to a particular class of systems of second order partial differential equations. We shall see that, in principle, a similar construction may be performed, but that there are many more subtleties to be considered. In particular, we find a number of different curvature operators which may be used to analyse properties of the original equations.  

We shall briefly summarise the methods used in the ODE case. Given a system of autonomous equations, expressed geometrically as a vector field $\Gamma$ of suitable type on some tangent manifold $TM$, the non-linear connection has typically been constructed by taking advantage of the `almost tangent structure', or `vertical endomorphism', a vector-valued $1$-form $S$ always found on a tangent manifold~\cite{ClBr60,Cra83}. Taking the Lie derivative $\Lie_\Gamma S$ gives another vector-valued $1$-form which has two eigenvalues, $+1$ and $-1$. The $+1$-eigenspaces form the vertical sub-bundle of the repeated tangent bundle $TTM \to TM$, whereas the $-1$-eigenspaces form a complementary distribution:\ this is the non-linear connection.

A similar technique can be used for time-dependent second order ODEs. The structure now is of a manifold fibred over $\R$, the manifold of time values, and a vector field $\Gamma$ of suitable type on the first jet manifold of this fibration. Once again there is a vertical endomorphism $S$, but now $\Lie_\Gamma S$ has a third eigenvalue, zero, with corresponding one-dimensional eigenspaces spanned by the values of $\Gamma$ itself.

There is, interestingly, a second approach to the construction of the horizontal distribution which may be used in both the autonomous and time-dependent cases. This second approach regards the system of equations, not as a vector field, but as a submanifold of a second order structure:\ the second order tangent manifold, or second order jet manifold, as the case may be. There is now a second order version of the vertical endomorphism (see~\cite{CSC86} for the autonomous case), and this can be applied to the annihilator of the equation's tangent sub-bundle. Pulling this structure back to the first order manifold (tangent or jet) then gives the same construction but in dual form, as a decomposition of a cotangent bundle.

For the systems of second order PDEs studied in this paper, we wish to carry out analogues of these constructions, and so we again want the equations to be written `in solved form'. This now suggests that we restrict attention to those particular systems of over-determined PDEs which can be described using sections of the jet projection $J^2 \to J^1$:\ they are equations of connection type, and take the form
\begin{equation}
\label{Econnpde}
\pdb{y^\sigma}{x^i}{x^j} = F^\sigma_{ij} \biggl( x^k, y^\nu, \pd{y^\nu}{x^k} \biggr) \qquad 1 \le \sigma, \nu \le m, \quad 1 \le i,j,k \le n.
\end{equation}
We shall use analogues of the vertical endomorphism operators appropriate for the case of several independent variables, and use both the approaches described above. Our principal result is that a tangent bundle decomposition can be achieved when given a closed $1$-form $\varphi$ on the manifold of independent variables, together with a local vector field $v$ on that manifold satisfying $i_v\varphi \ne 0$. The detailed consequences are, however, quite unexpected, because the three-way splitting already known in the case of ODEs generalizes in two different ways:\ one arises directly from the eigenvalue problem for the Lie derivative of the vertical endomorphism, with positive, negative and zero eigenvalues, whereas the other comes from an alternative decomposition of the direct sum of the positive and zero eigendistributions. These two splittings may then be combined to give a four-way splitting. This comparatively fine structure then allows us to construct a number of different curvature operators:\ some of these generalize and refine operators (such as the Jacobi endomorphism) already known for ODEs, whereas others are new and so provide potential for further investigation. 

As demonstrations of the utility of our approach we present three applications. The first is the development of a hypothesis concerning separability in coordinates of a system of connection PDEs. In a 1996 paper Cantrijn et al~\cite{Canetal96} identified necessary and sufficient curvature conditions for a system of second order ordinary differential equations to be separable in co-ordinates. As a first step towards generalising this result to PDEs we provide a set of necessary conditions, akin to those in~\cite{Canetal96}, for a system of connection PDEs to be separable in coordinates. Secondly, we show that for harmonic maps our curvature constructions are exactly those we expect in this case so that our generalisation is fit for purpose in the same way as the ODE constructions generalise the Riemannian curvature for a linear connection. Thirdly, we present a concrete example which is not of harmonic map type; we shall carry this example into a future paper where we use our curvature constructions to study singularity formation (surface collapse) of congruences of solutions of PDEs of connection type.

Although, as we have said, our results apply directly to equations of connection type, the range of potential applications is much wider, as connection equations may describe important classes of solutions of more complicated equations. Indeed, second order variational equations (arising from an Euler--Lagrange operator) are equivalent to a parametrized family of equations of connection type, as described explicitly in~\cite{RoSa14}; examples include harmonic map equations and many field equations used in theoretical physics.

To assist the reader we shall, in the body of this paper, recall the construction for ODEs before describing the geometric background used in the PDE case.


\section{Geometric structures for second order ODEs}

We start by recalling the various constructions used for time-dependent second order ODEs which we aim to generalize~\cite{Kra97}; these are equations of the form given in~\eqref{Econnpde} but with $n=1$, so that they may be written as
\begin{equation*}
\frac{d^2 y^\sigma}{dt^2} = F^\sigma \biggl( t, y^\nu, \frac{dy^\nu}{dt} \biggr) \qquad 1 \le \sigma, \nu \le m.
\end{equation*}
Let $Y$ be a differentiable manifold of dimension $m+1$, such that $\pi : Y \to \R$ is a fibred manifold (so that $\pi$ is a surjective submersion, or equivalently admits local sections). The first jet manifold $J^1\pi$ contains equivalence classes $j^1_x \phi$ of local sections $\phi : I \to Y$, where $I \subset \R$ is an nonempty open subset; two local sections are equivalent if they have the same value and first derivatives at $x \in \R$. The second jet manifold $J^2\pi$ is defined in a similar way. The maps $\pi_1 : J^1 \pi \to \R$ and $\pi_{1,0} : J^1\pi \to Y$, defined by $\pi_1(j^1_x\phi) = x$, $\pi_{1,0}(j^1_x\phi) = \phi(x)$, are the first order source and target projections; the corresponding second order maps are $\pi_2 : J^2 \pi \to \R$ and $\pi_{2,0} : J^2\pi \to Y$. Finally the order reduction map $\pi_{2,1} : J^2 Y \to J^1 Y$ is defined by $\pi_{2,1}(j^2_x\phi) = j^1_x\phi$.

We let $t$ denote the identity coordinate on $\R$, and take fibred coordinates $(t, y^\sigma)$ on $Y$. The additional coordinates on the jet manifolds are defined by
\begin{equation*}
\ydot^\sigma(j^1_x\phi) = \ydot^\sigma(j^2_x\phi) = \eval{\frac{d\phi^\sigma}{dt}}{x} \, , \qquad 
\yddot^\sigma(j^2_x\phi) = \eval{\frac{d^2\phi^\sigma}{dt^2}}{x} \, ;
\end{equation*}
thus the coordinates on $J^1\pi$ are $(t, y^\sigma, \ydot^\sigma)$ and those on $J^2\pi$ are $(t, y^\sigma, \ydot^\sigma, \yddot^\sigma)$.

Every local section $\phi$ of $\pi$ gives rise to local sections $j^1\phi$ of $\pi_1$ and $j^2\phi$ of $\pi_2$, both with the same domain as $\phi$, by setting
\begin{equation*}
j^1\phi(x) = j^1_x\phi \, , \qquad j^2\phi(x) = j^2_x\phi \qquad (x \in I) \, ;
\end{equation*}
these new sections are called prolongations of $\phi$.

It is not, however, the case that every local section of $\pi_1$ or $\pi_2$ is the prolongation of a local section of $\pi$. We say that a differential $1$-form $\theta$ on $J^1\pi$ or $J^2\pi$ is a contact form if the pullback $(j^1\phi)^* \theta$ or $(j^2\phi)^* \theta$ by any prolonged local section is necessarily zero. A local basis for the contact $1$-forms on $J^1\pi$ is given by $(\omega^\sigma)$ where $\omega^\sigma = dy^\sigma - \ydot^\sigma dt$, and a local basis for the contact $1$-forms on $J^2\pi$ is given by $(\omega^\sigma, \omdot^\sigma)$ where in addition $\omdot^\sigma = d\ydot^\sigma - \yddot^\sigma dt$. It is sometimes convenient to take $(dt, \omega^\sigma, d\ydot^\sigma)$ instead of $(dt, dy^\sigma, d\ydot^\sigma)$ as a local basis for all the $1$-forms on $J^1\pi$, and similarly to take $(dt, \omega^\sigma, \omdot^\sigma, d\yddot^\sigma)$ as a local basis for all the $1$-forms on $J^2\pi$. 

By identifying a local section $\phi$ with a curve in $Y$, we may regard $J^1\pi$ as a submanifold of $TY$; the jet $j^1_x\phi$ corresponds to the tangent vector
\begin{equation*}
\biggl( \vf{t} + \frac{d\phi^\sigma}{dt} \vf{y^\sigma} \biggr)_{\phi(x)} \in T_{\phi(x)}Y
\end{equation*}
so that $\pi_{1,0} : J^1\pi \to Y$ becomes an affine sub-bundle of $TY \to Y$, modelled on the bundle of vertical tangent vectors $V\pi \to Y$. In the second order case we identify a prolonged local section $j^1\phi$ with a curve in $J^1\pi$, and so regard $J^2\pi$ as a submanifold of $TJ^1\pi$; the jet $j^2_x\phi$ corresponds to the tangent vector
\begin{equation*}
\biggl( \vf{t} + \ydot^\sigma \vf{y^\sigma} + \frac{d^2\phi^\sigma}{dt^2} \vf{\ydot^\sigma} \biggr)_{j^1_x\phi} \in T_{j^1_x\phi}J^1\pi
\end{equation*}
so that $\pi_{2,1} : J^2\pi \to J^1\pi$ becomes an affine sub-bundle of $TJ^1\pi \to J^1\pi$, modelled on the bundle of vertical tangent vectors $V\pi_{1,0} \to J^1\pi$.

We now construct an endomorphism of $TJ^1\pi$, a vector-valued $1$-form $S_1$ on $J^1\pi$ called the vertical endomorphism. Take a point $j^1_x\phi \in J^1\pi$, and consider the linear map
\begin{equation*}
v_{j^1_x\phi} = \id - T\phi \circ T\pi : T_{\phi(x)}Y \to T_{\phi(x)}Y \, .
\end{equation*}
Following this map with $T\pi$ gives zero, showing that it takes its values in the vertical subspace $V_{\phi(x)}\pi \subset T_{\phi(x)}Y$. But $V_{j^1_x\phi}\pi_{1,0}$ is the tangent space to an affine space, so there is a canonical isomorphism $l_{j^1_x\phi} : V_{\phi(x)}\pi \to V_{j^1_x\phi}\pi_{1,0}$. The composite linear map
\begin{equation*}
T_{j^1_x\phi}J^1\pi \xrightarrow{T\pi_{1,0}} T_{\phi(x)}Y \xrightarrow{v_{j^1_x\phi}} V_{\phi(x)}\pi \xrightarrow{l_{j^1_x\phi}} V_{j^1_x\phi}\pi_{1,0}
\subset T_{j^1_x\phi}J^1\pi
\end{equation*}
then describes the action of $S_1$ on the fibre of $TJ^1\pi$ at $j^1_x\phi$; in coordinates
\begin{equation*}
S_1 = \omega^\sigma \otimes \vf{\ydot^\sigma} \, .
\end{equation*}
There is a related endomorphism of $TJ^2\pi$, now a vector-valued $1$-form $S_2$ on $J^2\pi$ and again called the vertical endomorphism, but we cannot construct it in the same way from the affine bundle structure of $\pi_{2,1} : J^2\pi \to J^1\pi$ because the first part of the composite linear map would be 
\begin{equation*}
T_{j^2_x\phi}J^2\pi \xrightarrow{T\pi_{2,1}} T_{j^1_x\phi}J^1\pi \xrightarrow{v_{j^2_x\phi}} V_{j^1_x\phi}\pi_1
\end{equation*}
where $v_{j^2_x\phi} = \id - Tj^1\phi \circ T\pi_1$, whereas the final map obtained from the affine structure would be
\begin{equation*}
V_{j^1_x\phi}\pi_{1,0} \xrightarrow{l_{j^2_x\phi}} V_{j^2_x\phi}\pi_{2,1} \subset T_{j^2_x\phi}J^2\pi
\end{equation*}
so that the two parts of the proposed composite do not fit together. Instead we take a more general approach to obtain a more appropriate final map. Consider a point $j^2_x\phi \in J^2\pi$ and a tangent vector $\zeta \in V_{j^1_x\phi}\pi_1$, and let $\phi_t$ be a one-parameter family of local sections satisfying the conditions that $j^2_x\phi_0 = j^2_x\phi$ and $d(j^1_x\phi_t) / dt|_{t=0} = \zeta$. Such a one-parameter family always exists, and the tangent vector
\begin{equation*}
\zeta^{\vl} = \eval{\frac{d(j^1_x\chi_t)}{dt}}{t=0} \, ,
\end{equation*}
where the new family of local sections $\chi_t$ is given by $\chi_t(x) = \phi_{tx}(x)$, is independent of the choice of $\phi_t$ and satisfies $T\pi_{2,0}(\zeta^{\vl}) = 0$. This construction gives a linear isomorphism $V_{j^1_x\phi}\pi_1 \to V_{j^2_x\phi}\pi_{2,1}$, and may be used to complete the composite map
\begin{equation*}
T_{j^2_x\phi}J^2\pi \xrightarrow{T\pi_{2,1}} T_{j^1_x\phi}J^1\pi \xrightarrow{v_{j^2_x\phi}} V_{j^1_x\phi}\pi_1
\xrightarrow{\zeta \mapsto \zeta^{\vl}} V_{j^2_x\phi}\pi_{2,1} \subset T_{j^2_x\phi}J^2\pi \, .
\end{equation*}
In coordinates
\begin{equation*}
S_2 = \omega^\sigma \otimes \vf{\ydot^\sigma} + 2 \omdot^\sigma \otimes \vf{\yddot^\sigma} \, ,
\end{equation*}
and we see that $T\pi_{2,1} \circ S_2 = S_1 \circ T\pi_{2,1}$.

All the objects described so far are general, and arise on any jet bundle over $\R$. We now consider a particular choice of second order ODE.

A vector field $\Gamma$ on $J^1\pi$ is called a second order vector field, or sometimes a second order differential equation field (\textsc{sode}) or a semispray, if its contractions with the time $1$-form and the vertical endomorphism satisfy
\begin{equation*}
i_\Gamma dt = 1 \, , \qquad i_\Gamma S_1 = 0 \, .
\end{equation*}
These two conditions force the coordinate representation of $\Gamma$ to be
\begin{equation*}
\Gamma = \vf{t} + \ydot^\sigma \vf{y^\sigma} + F^\sigma \vf{\ydot^\sigma} \, ,
\end{equation*}
so we also see that $i_\Gamma \omega^\sigma = 0$ for any basis contact form $\omega^\sigma$.

We now use the vector field $\Gamma$ to write the tangent bundle $TJ^1\pi$ as a direct sum
\begin{equation*}
TJ^1\pi = \dist_- \oplus \Span{\{\Gamma\}} \oplus V\pi_{1,0} = \dist_- \oplus \dist_0 \oplus \dist_+
\end{equation*}
where $\dist_+$ is the vertical sub-bundle with respect to the projection $\pi_{1,0}$, $\dist_0$ is the sub-bundle spanned by the second order vector field itself, and $\dist_-$ is a `horizontal sub-bundle' which will now be defined.

Consider the Lie derivative vector-valued $1$-form $\Lie_\Gamma S_1$, with coordinate expression
\begin{equation*}
\Lie_\Gamma S_1 = \biggl( d\ydot^\sigma - F^\sigma dt - \pd{F^\sigma}{\ydot^\nu} \omega^\nu \biggr) \otimes \vf{\ydot^\sigma} 
- \omega^\sigma \otimes \vf{y^\sigma} \, ,
\end{equation*}
regarded as a family of linear operators on the fibres of $TJ^1\pi$:\ we see that there are three eigendistributions, corresponding to the eigenvalues zero and $\pm 1$. The $+1$ eigendistribution is just $\dist_+$ with rank $m$, and the zero eigendistribution is $\dist_0$ and so has rank one. The $-1$ eigendistribution $H = \dist_-$ again has rank $m$ and is spanned by the vector fields
\begin{equation*}
H_\sigma = \vf{y^\sigma} - \Gamma^\nu_\sigma \vf{\ydot^\nu} \, , \qquad \Gamma^\nu_\sigma = - \tfrac{1}{2} \pd{F^\nu}{\ydot^\sigma} \, .
\end{equation*}

There is an alternative approach to the decomposition of $TJ^1\pi$, using the second order vertical endomorphism $S_2$~\cite{Sau97}. To use this, we note from the coordinate formula for the vector field $\Gamma$ that the second order condition is precisely the constraint needed for $\Gamma$ to take its values in the affine submanifold $J^2\pi \subset TJ^1\pi$. We may therefore regard $\Gamma$ as a section of $\pi_{2,1} : J^2\pi \to J^1\pi$. The image $\Gamma(J^1\pi) \subset J^2\pi$ may be regarded as a submanifold defined locally by the equations $\yddot^\sigma = F^\sigma$, and so the annihilator $T^\circ (\Gamma(J^1\pi)) \subset T^*_{\Gamma(J^1\pi)} J^2\pi$ of its tangent space is a codistribution along the image, spanned locally by the $1$-forms
\begin{equation*}
d\yddot^\sigma - dF^\sigma \, .
\end{equation*}
Operating on this codistribution with the second order vertical endomorphism $S_2$ gives a new codistribution $S_2 \bigl( T^\circ (\Gamma(J^1\pi)) \bigr)$ along the image, now spanned locally by the $1$-forms
\begin{equation*}
S_2(d\yddot^\sigma - dF^\sigma) = 2\omdot^\sigma - \pd{F^\sigma}{\ydot^\nu} \omega^\nu = 2(\omdot^\sigma + \Gamma^\sigma_\nu \omega^\nu) \, ,
\end{equation*}
and finally we may pull this back to $J^1\pi$ by the section $\Gamma$ to give the codistribution $\Gamma^* S_2 \bigl( T^\circ (\Gamma(J^1\pi)) \bigr)$ on $J^1\pi$, spanned locally by the $1$-forms
\begin{equation*}
\psi^\sigma = \ombar^\sigma + \Gamma^\sigma_\nu \omega^\nu
\end{equation*}
where $\ombar^\sigma = d\ydot^\sigma - F^\sigma dt$. We see that
\begin{align*}
i_\Gamma dt & = 1 & i_\Gamma \omega^\sigma & = 0 & i_\Gamma \psi^\sigma & = 0 \\
i_{H_\nu} dt & = 0 & i_{H_\nu} \omega^\sigma & = \delta^\sigma_\nu & i_{H_\nu} \psi^\sigma & = 0 \\
i_{V_\nu} dt & = 0 & i_{V_\nu} \omega^\sigma & = 0 & i_{V_\nu} \psi^\sigma & = \delta^\sigma_\nu 
\end{align*}
showing that this codistribution, together with the two codistributions spanned by $dt$ and by the contact forms $\omega^\sigma$, gives a decomposition of $T^* J^1\pi$ dual to the decomposition of $TJ^1\pi$ by the three eigendistributions of $\Lie_\Gamma S_1$. 

To conclude this section we consider Lie bracket relations between basis elements of the three eigendistributions. Writing $V_\sigma = \p / \p \ydot^\sigma$ we see that
\begin{subequations}
\begin{align}
[\Gamma, H_\sigma] & = \Gamma^\nu_\sigma H_\nu + \Phi^\nu_\sigma V_\nu \label{Jacobi} \\
[\Gamma, V_\nu] & = - H_\nu + \Gamma^\sigma_\nu V_\sigma \\
[H_\sigma, H_\nu] & = \curv^\rho_{\nu\sigma} V_\rho
\end{align}
\end{subequations}
where the symbols $\curv^\rho_{\nu\sigma} = H_\nu (\Gamma^\rho_\sigma) - H_\sigma (\Gamma^\rho_\nu)$ are the components of a vector-valued $2$-form on $J^1\pi$ representing the curvature of the distribution $\dist_-$, and the symbols $\Phi^\nu_\sigma = \Gamma^\rho_\sigma \Gamma^\nu_\rho - \Gamma(\Gamma^\nu_\sigma) - H_\sigma( F^\nu)$ are the components of a vector-valued $1$-form $\Phi$ on $J^1\pi$ known as the Jacobi endomorphism and representing certain curvature components of $\dist_-\oplus \dist_0$.


\section{Geometric structures for second order PDEs of connection type}

We now describe the general structure of jet manifolds used where there are several independent variables~\cite{Sau89}.

Let $\pi : Y \to X$ be a fibred manifold where $\dim X = n$ and $\dim Y = m+n$. The first and second jet manifolds $J^1\pi$ and $J^2\pi$ are defined in the same way as before, as are the source, target and order reduction maps. We now let $x^i$ be coordinates on $X$, and we take fibred coordinates $(x^i, y^\sigma)$ on $Y$. The additional coordinates on the jet manifolds are now defined by
\begin{equation*}
y^\sigma_i(j^1_x\phi) = y^\sigma_i(j^2_x\phi) = \eval{\pd{\phi^\sigma}{x^i}}{x} \, , \qquad 
y^\sigma_{ij}(j^2_x\phi) = \eval{\pdb{\phi^\sigma}{x^i}{x^j}}{x} \, ;
\end{equation*}
thus the coordinates on $J^1\pi$ are $(x^i, y^\sigma, y^\sigma_i)$ and those on $J^2\pi$ are $(x^i, y^\sigma, y^\sigma_i, y^\sigma_{ij})$. Note that, in view of the symmetry of second partial derivatives, $y^\sigma_{ji} = y^\sigma_{ij}$:\ this has consequences for our use of the summation convention, and so we introduce the symbol $n(ij)$ giving the number of distinct values represented by the indices $i$ and $j$.

The prolongations of local sections are defined in the same way as before, as are contact forms, although we now extend the use the of latter description to $r$-forms where $1 \le r \le n$. A local basis for the contact $1$-forms on $J^1\pi$ is given by $(\omega^\sigma)$ where now $\omega^\sigma = dy^\sigma - y^\sigma_i dx^i$, and a local basis for the contact $1$-forms on $J^2\pi$ is given by $(\omega^\sigma, \omega^\sigma_i)$ where in addition $\omega^\sigma_i = dy^\sigma_i - y^\sigma_{ij} dx^j$. We often take $(dx^i, \omega^\sigma, dy^\sigma_i)$ instead of $(dx^i, dy^\sigma, dy^\sigma_i)$ as a local basis for all the $1$-forms on $J^1\pi$, and similarly take $(dx^i, \omega^\sigma, \omega^\sigma_i, dy^\sigma_{ij})$, with a suitable understanding about the symmetric pair of indices $(ij)$, as a local basis for all the $1$-forms on $J^2\pi$.

The contact forms on $J^1\pi$ (or $J^2\pi$) define distributions on these two manifolds, called the contact (or Cartan) distributions, and also known as the contact structures (of orders $1$ and $2$) on $\pi$. These distributions are
\begin{align*}
\cart_{\pi_1} & = \annih \{ \omega^\sigma \} = \Span \biggl\{ \vf{x^i} + y^\sigma_i \vf{y^\sigma}, \vf{y^\nu_k} \bigg\} \\
\cart_{\pi_2} & = \annih \{ \omega^\sigma, \omega^\sigma_i \} 
= \Span \biggl\{ \vf{x^i} + y^\sigma_i \vf{y^\sigma} + y^\sigma_{ij} \vf{y^\sigma_j}, \vf{y^\nu_{kl}} \bigg\} \, .
\end{align*}

It is again possible to construct vertical endomorphisms~\cite{Sau87}, but there is an added complication because, with $n > 1$, the ranks of the bundles of vertical vectors are no longer equal, and so they cannot be isomorphic:
\begin{equation*}
\rank V\pi = m \, , \qquad \rank V\pi_{1,0} = mn \, , \qquad \rank V\pi_{2,1} = \tfrac{1}{2}mn(n+1) \, .
\end{equation*}
We deal with this problem by selecting a nonvanishing $1$-form $\varphi$ on $X$ (we assume the topology of $X$ is such that this is possible) and in the first order case we use this to select a distinguished sub-bundle
\begin{equation*}
\pi^* (\Span{\varphi}) \otimes V\pi \subset \pi^* (T^* X) \otimes V\pi \cong V\pi_{1,0}
\end{equation*}
of rank $m$, where the isomorphism above arises from the structure of $J^1\pi \to Y$ as an affine bundle, now modelled on the vector bundle $\pi^*(T^* X) \otimes V\pi \to Y$. We may therefore define the $\varphi$-vertical endomorphism $S_1^\varphi$ by constructing, at each point $j^1_x\phi \in J^1\pi$, the composite linear map
\begin{equation*}
T_{j^1_x\phi}J^1\pi \xrightarrow{T\pi_{1,0}} T_{\phi(x)}Y \xrightarrow{v_{j^1_x\phi}} V_{\phi(x)}\pi 
\xrightarrow{l_{j^1_x\phi, \varphi}} V_{j^1_x\phi}\pi_{1,0} \subset T_{j^1_x\phi}J^1\pi
\end{equation*}
in the same way as before, where with $n > 1$ the final map
\begin{equation*}
l_{j^1_x\phi, \varphi} : V_{\phi(x)}\pi \to V_{j^1_x\phi}\pi_{1,0}
\end{equation*}
depends on $\varphi$ and is no longer surjective. In coordinates we have
\begin{equation*}
S_1^\varphi = \varphi_i \omega^\sigma \otimes \vf{y^\sigma_i} \, , \qquad \varphi = \varphi_i dx^i \, .
\end{equation*}

In the second order case we make the additional assumption that $\varphi$ is closed, so that locally there is some function $f$ such that $\varphi = df$. We again consider a point $j^2_x\phi \in J^2\pi$ and a tangent vector $\zeta \in V_{j^1_x\phi}\pi_1$, letting $\phi_t$ be a one-parameter family of local sections satisfying the conditions that $j^2_x\phi_0 = j^2_x\phi$ and $d(j^1_x\phi_t) / dt|_{t=0} = \zeta$. The difference is that we now define the modified one-parameter family $\chi_t$ by $\chi_t(x) = \phi_{tf(x)}(x)$, and then the tangent vector
\begin{equation*}
\zeta^{\vl\{\varphi\}} = \eval{\frac{d(j^1_x\chi_t)}{dt}}{t=0} \, ,
\end{equation*}
is independent both of the choice of $\phi_t$ and of the choice of local function $f$; again it satisfies $T\pi_{2,0}(\zeta^{\vl\{\varphi\}}) = 0$. This construction gives a linear injection $V_{j^1_x\phi}\pi_1 \to V_{j^2_x\phi}\pi_{2,1}$, and may be used to create the composite map
\begin{equation*}
T_{j^2_x\phi}J^2\pi \xrightarrow{T\pi_{2,1}} T_{j^1_x\phi}J^1\pi \xrightarrow{v_{j^2_x\phi}} V_{j^1_x\phi}\pi_1
\xrightarrow{\zeta \mapsto \zeta^{\vl\{\varphi\}}} V_{j^2_x\phi}\pi_{2,1} \subset T_{j^2_x\phi}J^2\pi \, .
\end{equation*}
The coordinate formula for this map $S_2^\varphi$ is more complicated:\ it is
\begin{equation*}
S_2^\varphi = \varphi_i \omega^\sigma \otimes \vf{y^\sigma_i} + \pd{\varphi_i}{x^j} \omega^\sigma \otimes \vf{y^\sigma_{ij}} 
+ \frac{2}{n(ij)} \varphi_i \omega^\sigma_j \otimes \vf{y^\sigma_{ij}} \, .
\end{equation*}

The vertical endomorphisms $S_1^\varphi$ and $S_2^\varphi$ are both vector valued $1$-forms, and we make use of several other vector-valued forms in our discussion, so we shall need to consider a bracket operation on these objects. This will be the \FN\ bracket of vector valued forms~\cite{FrNi56}, defined by
\begin{align*}
\fnb{\omega \otimes U, \eta \otimes W} & = \omega \wedge \eta \otimes [U,W] + \omega \wedge \Lie_U \eta \otimes W 
- \Lie_W \omega \wedge \eta \otimes U \\
& \quad + (-1)^{\deg \omega} ( d\omega \wedge i_U \eta \otimes W + i_W \omega \wedge d\eta \otimes U )
\end{align*}
where $U$, $W$ are vector fields and $\omega$, $\eta$ are forms (of any degree). Indeed if $\omega$ is a $0$-form (function) then the \FN\ bracket is simply the Lie derivative of $\eta \otimes W$ by the vector field $\omega U$.

We turn finally to second order PDEs of the type we are going to consider. A general system of second order PDEs may be regarded geometrically as a submanifold of the second jet manifold $J^2\pi$; we shall consider those systems where the submanifold is the image of a section $\Gamma$ of the affine bundle $\pi_{2,1} : J^2\pi \to J^1\pi$. Such systems may be called second order connections, or semispray connections, and in general are overdetermined systems of PDEs. Defining local functions $F^\sigma_{ij} = y^\sigma_{ij} \circ \Gamma$ on $J^1\pi$, the system of PDEs is given in coordinates by
\begin{equation*}
\pdb{\phi^\sigma}{x^i}{x^j} = F^\sigma_{ij} \biggl( x^k, \phi^\nu, \pd{\phi^\nu}{x^k} \biggr) \, .
\end{equation*}
We may also represent this system by a vector-valued $1$-form on $J^1\pi$, in the same way as a system of second order ODEs may be represented by a second order vector field. Given any point $j^1_x\phi \in J^1\pi$ and a tangent vector $\zeta \in T_x X$, define the tangent vector $\pdeform_{j^1_x\phi}(\zeta) \in T_{j^1_x\phi}J^1\pi$ by
\begin{equation*}
\pdeform_{j^1_x\phi}(\zeta) = \eval{(Tj^1\phi)(\zeta)}{\Gamma(j^1_x\phi)} \, .
\end{equation*}
In this formula the tangent map $Tj^1\phi$ depends on the derivatives of the prolonged section $j^1\phi$ at $x$, and hence on the second derivatives of the original section $\phi$ at $x$; these second derivatives are given by the point $\Gamma(j^1_x\phi) \in J^2\pi$. This procedure defines a linear map $T_x X \to T_{j^1_x\phi}J^1\pi$, so that the projection $T\pi_1 : TJ^1\pi \to TX$ followed by the map $\zeta \mapsto \pdeform_{j^1_x\phi}(\zeta)$ defines a vector-valued $1$-form $\pdeform$ on $J^1\pi$ given in coordinates by
\begin{equation*}
\pdeform = dx^i \otimes \biggl( \vf{x^i} + y^\sigma_i \vf{y^\sigma} + F^\sigma_{ij} \vf{y^\sigma_j} \biggr) = dx^i \otimes \Gamma_i
\end{equation*}
where $\Gamma_i$ are vector fields on $J^1\pi$; note that, by construction, the functions $F^\sigma_{ij}$ are symmetric in the indices $i$ and $j$. The vector fields $\Gamma_i$ are local generators of a distribution $D_\Gamma$ on $J^1\pi$ which is a horizontal sub-bundle of the contact distribution $\cart_{\pi_1}$. We also write $\Gamma_v$ for the vector field on $J^1\pi$ corresponding to the vector field $v$ on $M$, so that $\Gamma_v = v^i \Gamma_i = i_v \pdeform$.


\section{Tangent bundle decompositions for systems of PDEs}

We have seen that a system of second order ordinary differential equations, represented by a second order vector field $\Gamma$ on $J^1\pi$, induces a splitting of the tangent bundle $TJ^1\pi$ into the three eigenspaces of the deformation of the vertical endomorphism $S$ along the vector field 
$\Gamma$. One might wonder whether a similar property holds for PDEs in the form of second order connections. In fact such splittings are possible but, as we shall show, the process is not completely straightforward and the details of the result are unexpected.

The first difference is related with the vertical endomorphism, which now depends upon a choice of a non-vanishing $1$-form $\varphi$ on $X$, and so gives an isomorphism of $V\pi$ with an $m$-dimensional sub-bundle $S_1^\varphi(V\pi_1)$ of $V\pi_{1,0}$. The second difference, giving rise to a major complication, comes from the fact that instead of a single vector field (corresponding to a distribution $\dist_0$ of rank one) we now have on $J^1\pi$ a distribution $D_{\Gamma}$ of rank $n$, so that the connection is represented by a vector valued $1$-form $\pdeform$. (The notational distinction, using $\dist_0$ and $D_{\Gamma}$, is, as we shall see, significant.) The deformation becomes a \FN\ bracket $\fnb{\pdeform, S^\varphi_1}$, a vector valued $2$-form on $J^1\pi$, given in coordinates by
\begin{equation*}
\fnb{\pdeform, S_1^\varphi} = - \, \varphi_i dx^i \wedge \omega^\nu \otimes \vf{y^\nu} 
+ dx^i \wedge \biggl( \varphi_k dy^\nu_i + \pd{\varphi_k}{x^i} \omega^\nu - \varphi_j \pd{F^\nu_{ik}}{y^\sigma_j} \omega^\sigma \biggr)
\otimes \vf{y^\nu_k} \, .
\end{equation*}
In order to obtain eigenspaces, we need a vector-valued $1$-form rather than a vector-valued $2$-form, and so we need to take directional slices.
\begin{lemma}
Let $v$ be a vector field on $X$, and let $\Gamma_v$ be the vector field on $J^1\pi$ defined by $\Gamma_v = i_v \pdeform$. Then
\[
i_{\Gamma_v} \fnb{\pdeform, S_1^\varphi} = \Lie_{\Gamma_v} S_1^\varphi \, .
\]
\end{lemma}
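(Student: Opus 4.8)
The plan is to verify the identity by a direct computation in coordinates, exploiting the explicit formula for the \FN\ bracket $\fnb{\pdeform, S_1^\varphi}$ already displayed above. Write $\pdeform = dx^i \otimes \Gamma_i$, so that $\Gamma_v = v^i \Gamma_i = i_v \pdeform$, and recall that the \FN\ bracket is $\R$-bilinear and satisfies a Leibniz-type rule with respect to multiplication of the scalar form factors by functions. The first step is to observe that, since $v$ is a vector field on $X$, the components $v^i$ are (pullbacks of) functions on $X$, hence $dx^j(\Gamma_k) = \delta^j_k$ gives $i_{\Gamma_v}(dx^i \wedge \alpha) = v^i \alpha - dx^i \wedge i_{\Gamma_v}\alpha$ for any form $\alpha$ on $J^1\pi$, and in particular the contraction of the $2$-form bracket with $\Gamma_v$ simply replaces the leading $dx^i$ factor by $v^i$ (the ``inner'' $dx^i \wedge (\cdots)$ part of each term contributes nothing extra because $i_{\Gamma_v}$ of the remaining scalar $1$-form combinations will be handled termwise).

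Next I would compute the right-hand side $\Lie_{\Gamma_v} S_1^\varphi$ directly. Using $S_1^\varphi = \varphi_i \omega^\sigma \otimes \p/\p y^\sigma_i$ and the Leibniz rule for the Lie derivative of a vector-valued $1$-form, this breaks into three pieces: $(\Lie_{\Gamma_v}\varphi_i)\,\omega^\sigma \otimes \p/\p y^\sigma_i$, which since $\varphi_i$ is a function on $X$ equals $v^k(\p \varphi_i/\p x^k)\,\omega^\sigma \otimes \p/\p y^\sigma_i$; the term $\varphi_i (\Lie_{\Gamma_v}\omega^\sigma)\otimes \p/\p y^\sigma_i$, for which one uses $\Lie_{\Gamma_v}\omega^\sigma = i_{\Gamma_v}d\omega^\sigma + d(i_{\Gamma_v}\omega^\sigma) = i_{\Gamma_v}d\omega^\sigma$ since $i_{\Gamma_v}\omega^\sigma = 0$, and $d\omega^\sigma = -dy^\sigma_i \wedge dx^i$ yields $\Lie_{\Gamma_v}\omega^\sigma = v^k\,dy^\sigma_k - (\text{terms in }dx^i)$, the $dx^i$ terms being absorbed; and finally $\varphi_i\, \omega^\sigma \otimes \Lie_{\Gamma_v}(\p/\p y^\sigma_i) = -\varphi_i\,\omega^\sigma \otimes [\,\p/\p y^\sigma_i, \Gamma_v\,]$, where $[\Gamma_v, \p/\p y^\sigma_i] = -v^k(\p F^\nu_{kj}/\p y^\sigma_i)\,\p/\p y^\nu_j - \delta^j_i \cdots$, producing both a $\p/\p y^\nu$-type term from the $y^\sigma_i$-derivative of the $y^\sigma$-component of $\Gamma_v$ and the $\p F$-terms in the $\p/\p y^\nu_k$ component.

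Having assembled both sides as explicit sums of decomposable vector-valued $1$-forms in the basis $(dx^i, \omega^\sigma, dy^\sigma_i)\otimes(\p/\p y^\sigma,\p/\p y^\sigma_i)$, the last step is simply to match coefficients: the $\omega^\nu \otimes \p/\p y^\nu$ coefficient on the left is $-\varphi_i v^i = -i_v\varphi$, matching the analogous contribution from the bracket formula contracted with $\Gamma_v$; and in the $\p/\p y^\nu_k$ slot the three pieces $\varphi_k\, v^i dy^\nu_i$ (rewritten appropriately), $v^i(\p\varphi_k/\p x^i)\,\omega^\nu$, and $-v^i\varphi_j(\p F^\nu_{ik}/\p y^\sigma_j)\,\omega^\sigma$ line up with the three terms inside the large parenthesis of $\fnb{\pdeform, S_1^\varphi}$ after the leading $dx^i$ is replaced by $v^i$. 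I expect the main obstacle to be purely bookkeeping: keeping careful track of the $dx^i$-valued contributions on both sides (which must cancel or be shown to coincide) and of the symmetrization over the pair of indices hidden in the $F^\nu_{ik}$ and in the contraction conventions, so that no spurious factor of $n(ij)$ or stray $dx$-term survives. Once the $dx$-part is shown to match — this is where the hypothesis that $v$ lives on $X$ (so that $i_{\Gamma_v}dx^i = v^i$ is constant along the fibres) does the real work — the remaining comparison is mechanical.
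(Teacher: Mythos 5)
Your proposal is correct and follows essentially the same route as the paper: expand $\Lie_{\Gamma_v}S_1^\varphi$ by the Leibniz rule into the three pieces $\Gamma_v(\varphi_k)$, $\varphi_k\Lie_{\Gamma_v}\omega^\nu$ and $\varphi_k\omega^\nu\otimes[\Gamma_v,\p/\p y^\nu_k]$, then match against the contraction of the displayed coordinate formula for $\fnb{\pdeform,S_1^\varphi}$. The only caution is that the inner contractions are not all zero (e.g.\ $i_{\Gamma_v}dy^\nu_i = v^jF^\nu_{ji}$), but you already flag this bookkeeping of the $dx^i$-valued terms, and it works out exactly as you predict.
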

\begin{proof}
Using the coordinate expressions for $\pdeform$ and $S_1^\varphi$ we see first that
\begin{equation*}
\Gamma_v = v^i \biggl( \vf{x^i} + y^\sigma_i \vf{y^\sigma} + F^\sigma_{ij} \vf{y^\sigma_j} \biggr) \, , \qquad v = v^i \vf{x^i}
\end{equation*}
and then that
\begin{align*}
\Lie_{\Gamma_v} S_1^\varphi & = \Gamma_v(\varphi_k) \omega^\nu \otimes \vf{y^\nu_k} + \varphi_k (\Lie_{\Gamma_v} \omega^\nu) \otimes \vf{y^\nu_k}
+ \varphi_k \omega^\nu \biggl[ \Gamma_v, \vf{y^\nu_k} \biggr] \\
& = v^i \pd{\varphi_k}{x^i} \omega^\nu \otimes \vf{y^\nu_k} + \varphi_k v^i (dy^\nu_i - F^\nu_{ij} dx^j) \otimes \vf{y^\nu_k} \\
& \qquad - v^k \varphi_k \omega^\nu \otimes \vf{y^\nu} - v^i \varphi_k \pd{F^\sigma_{ij}}{y^\nu_k} \omega^\nu \otimes \vf{y^\sigma_j} \\
& = i_{\Gamma_v} \fnb{\pdeform, S_1^\varphi} \, .
\end{align*}
\end{proof}

We now wish to consider possible eigenvectors; it is clear that we need to restrict attention to the case where $i_v\varphi \ne 0$. We note also that if $U$ is an eigenvector of $\Lie_{\Gamma_v} S_1^\varphi$ then it is also an eigenvector of $\Lie_{\Gamma_{hv}} S_1^\varphi$ with an eigenfunction scaled by $h$; in other words, the directional slice corresponds, not to a specific vector field $v$ on $X$, but to a distribution of rank one, locally spanned by a vector field $v$ such that $i_v \varphi \ne 0$.

Consider possible eigenvectors
\begin{equation*}
U = U^h \vf{x^h} + U^\nu \vf{y^\nu} + U^\nu_h \vf{y^\nu_h} \, ;
\end{equation*}
we have, after some rearranging,
\begin{align*}
i_U \Lie_{\Gamma_v} S_1^\varphi & = - \, v^i \varphi_i (U^\nu - U^h y^\nu_h) \vf{y^\nu}
+ v^i \Bigl( \varphi_k (U^\nu_i - F^\nu_{ij} U^j) \, + \\
& \qquad + \pd{\varphi_k}{x^i} (U^\nu - U^h y^\nu_h) 
- \varphi_j \pd{F^\nu_{ik}}{y^\sigma_j} (U^\sigma - U^h y^\sigma_h) \Bigr) \vf{y^\nu_k} \, .
\end{align*}
We see immediately that any $\Gamma_w$ is an eigenvector with eigenvalue zero; we also see that any vertical vector field satisfying $v^i U^\nu_i = 0$ is an eigenvector with eigenvalue zero. The zero eigendistribution $\dist_0$ therefore satisfies $D_\Gamma \subset \dist_0$ and has rank at least $n + m(n-1)$.

We next consider the possibility of eigenvalues having the same sign as $i_v\varphi$. This can arise only with a vertical vector field; we see that any such vector field must be of the form
\begin{equation*}
U = U^\nu \varphi_k \vf{y^\nu_k} = S_1^\varphi(\bar{U}) \, , \qquad \bar{U} = U^\nu \vf{y^\nu}
\end{equation*}
so that
\begin{equation*}
i_U \Lie_{\Gamma_v} S_1^\varphi = (v^i \varphi_k) U^\nu \varphi_i \vf{y^\nu_k} = (v^i \varphi_i) \varphi_k U^\nu \vf{y^\nu_k} = (v^i \varphi_i) U
\end{equation*}
with an eigenvalue of $v^i \varphi_i = i_v\varphi$, so that the scaled vector field $hv$ with $h = (i_v\varphi)^{-1}$ would give rise to the same eigendistribution, but with an eigenvalue of $+1$. This eigendistribution $\dist_+$ has rank $m$.

Finally we consider the possibility of eigenvalues having the opposite sign. We see that by taking
\begin{equation*}
U = U^\nu \vf{y^\nu} + U^\nu_k \vf{y^\nu_k}
\end{equation*}
we obtain
\begin{equation*}
i_U \Lie_{\Gamma_v} S_1^\varphi 
= - v^i \varphi_i U^\nu \vf{y^\nu} 
+ v^i \biggl( \varphi_k U^\nu_i + \pd{\varphi_k}{x^i} U^\nu - \varphi_j \pd{F^\nu_{ik}}{y^\sigma_j} U^\sigma \biggr) \vf{y^\nu_k}
\end{equation*}
so that the eigenvalue must be $-v^i\varphi_i$, and therefore that we must have
\begin{equation*}
v^i \varphi_i U^\nu_k = - v^i \biggl( \varphi_k U^\nu_i + \pd{\varphi_k}{x^i} U^\nu - \varphi_j \pd{F^\nu_{ik}}{y^\sigma_j} U^\sigma \biggr)
\end{equation*}
so that
\begin{equation}
\label{Ehorsoln}
v^i(\varphi_i U^\nu_k + \varphi_k U^\nu_i) 
= v^i U^\sigma \biggl( \varphi_j \pd{F^\nu_{ik}}{y^\sigma_j} - \delta^\nu_\sigma \pd{\varphi_k}{x^i} \biggr) \, .
\end{equation}
We see that the terms in the bracket on the left-hand side are symmetric in the indices $i$ and $k$, and by construction the functions $F^\nu_{ik}$ are also symmetric in the indices $i$ and $k$. We therefore see that, if equation~\eqref{Ehorsoln} is to have solutions for all vector fields $v$ satisfying $i_v\varphi \ne 0$ then the term $\p \varphi_k / \p x^i$ must also be symmetric in the indices $i$ and $k$, so that the $1$-form $\varphi$ needs to be closed. We shall therefore make that assumption.

In order to see that~\eqref{Ehorsoln} does indeed have solutions, suppose that we have chosen a normalized local vector field $v$ satisfying $i_v \varphi = 1$. In order to obtain explicit formulas for the eigenspaces it is convenient to use adapted coordinates on $X$ corresponding to the pair $(\varphi,v)$, namely such that
\begin{equation}
\varphi = dx^1 \, , \quad  i_v \varphi = v^1 = 1 \, .
\end{equation}
Such charts are guaranteed by the Frobenius theorem, and indeed $\varphi$ annihilates an integrable distribution of rank $n-1$ on $X$. For convenience we shall use indices $p$, $q$ satisfying $2 \le p,q \le n$, while continuing with $1 \le i, j, \ldots \le n$. In these adapted coordinates we obtain, after some straightforward calculations,
\begin{equation*}
2U^\nu_1 = \biggl( U^\sigma v^i \, \pd{F^\nu_{i1}}{y^\sigma_1} - U^\nu_q v^q \biggr) \, , \qquad
U^\nu_q = \biggl( U^\sigma v^i \pd{F^\nu_{iq}}{y^\sigma_1} \biggr)
\end{equation*}
and we see that the eigendistribution $\dist_-$ corresponding to the eigenvalue $-1$ has rank $m$ and is spanned locally by the vector fields
\begin{gather*}
H_\sigma = \vf{y^\sigma} + H^\nu_{\sigma k} \vf{y^\nu_k} \\
H^\nu_{\sigma 1} = \tfrac{1}{2} \biggl( \pd{F^\nu_{11}}{y^\sigma_1} - v^p v^q \pd{F^\nu_{pq}}{y^\sigma_1} \biggr) \, , \qquad
H^\nu_{\sigma q} = \pd{F^\nu_{1q}}{y^\sigma_1} + v^p \pd{F^\nu_{pq}}{y^\sigma_1} = v^k \pd{F^\nu_{kq}}{y^\sigma_1} \, ;
\end{gather*}
we also see that
\begin{equation*}
2v^k H_{\sigma k}^\nu =  \pd{F^\nu_{11}}{y^\sigma_1} - v^p v^q \pd{F^\nu_{pq}}{y^\sigma_1} + 2 v^p v^k \pd{F^\nu_{pk}}{y^\sigma_1}
= v^i v^k \pd{F^\nu_{ik}}{y^\sigma_1} \, .
\end{equation*}
In summary, therefore, we have proved the following result.
\begin{theorem}
Let $\Gamma: J^1\pi \to J^2\pi$ be a second order connection. Given a nonvanishing closed $1$-form $\varphi$ and a vector field $v$ on $X$ such that $i_v \varphi \ne 0$, the vector-valued $1$-form $\Lie_{\Gamma_v} S_1^\varphi$ has three eigenvalues $\lambda_1 = 0$, $\lambda_2 > 0$ and $\lambda_3 = -\lambda_2 < 0$, corresponding to eigendistributions $\dist_0$ of rank $mn + n - m$, $\dist_+$ of rank $m$ and $\dist_-$ of rank $m$, such that
\begin{equation*}
TJ^1\pi = \dist_- \oplus \dist_0 \oplus \dist_+ \, . \tag{A}
\end{equation*} \qed
\end{theorem}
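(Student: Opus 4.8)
The plan is to assemble the theorem from the explicit eigenvector computation already carried out, organised as a case-by-case determination of the three eigendistributions followed by a dimension count. First I would use the Lemma to rewrite $\Lie_{\Gamma_v} S_1^\varphi$ as the directional slice $i_{\Gamma_v}\fnb{\pdeform, S_1^\varphi}$, which confirms that this object is a genuine vector-valued $1$-form on $J^1\pi$, and then work entirely from the formula for $i_U \Lie_{\Gamma_v} S_1^\varphi$ with $U = U^h\vf{x^h} + U^\nu\vf{y^\nu} + U^\nu_h\vf{y^\nu_h}$. The structural point to extract at once is that this contraction has no $\vf{x^h}$ component, so every eigenvector belonging to a nonzero eigenvalue is automatically $\pi_1$-vertical.

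I would then treat the eigenvalues one at a time. For $\lambda_1 = 0$ I exhibit the two families already identified: the fields $\Gamma_w = i_w\pdeform$, which span $D_\Gamma$ and are killed, together with the $\pi_{1,0}$-vertical fields satisfying $v^i U^\nu_i = 0$; these are independent, so $D_\Gamma \subseteq \dist_0$ and $\rank \dist_0 \ge mn + n - m$. For the eigenvalue of the same sign as $i_v\varphi$ I show from the formula that an eigenvector must be vertical of the form $S_1^\varphi(\bar U)$ with $\bar U = U^\nu\vf{y^\nu}$, that conversely every such field is an eigenvector with eigenvalue exactly $i_v\varphi$, and that $\bar U \mapsto S_1^\varphi(\bar U)$ is injective because $\varphi$ is nonvanishing; hence $\dist_+ = S_1^\varphi(V\pi_1)$ has rank exactly $m$. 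Since rescaling $v$ scales this eigenvalue and leaves the eigendistribution unchanged, we may, after possibly replacing $v$ by $-v$, take the positive eigenvalue as $\lambda_2$ on $\dist_+$ and then $\lambda_3 = -\lambda_2$ on $\dist_-$. The eigenvalue of opposite sign is the substantive case: an eigenvector must again be vertical, its eigenvalue is forced to equal $-i_v\varphi$, and the defining relation collapses to equation~\eqref{Ehorsoln}. This is exactly where closedness of $\varphi$ is needed — the symmetry of $\p\varphi_k/\p x^i$ is what makes~\eqref{Ehorsoln} solvable — so I would pass to coordinates adapted to $(\varphi,v)$ via the Frobenius theorem, normalise $i_v\varphi = 1$ and $\varphi = dx^1$, solve~\eqref{Ehorsoln} explicitly first for $U^\nu_q$ and then for $U^\nu_1$, and read off the $m$ generators $H_\sigma = \vf{y^\sigma} + H^\nu_{\sigma k}\vf{y^\nu_k}$ with the stated coefficients; as $U^\nu$ is free and determines $U^\nu_k$, this gives $\rank \dist_- = m$. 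I expect this negative-eigenvalue step to be the main obstacle, both because of the index bookkeeping in~\eqref{Ehorsoln} and because it is where the PDE theory genuinely diverges from the ODE case.

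Finally I close with the dimension count. Eigendistributions for distinct eigenvalues are pointwise in direct sum, so $\rank\dist_- + \rank\dist_0 + \rank\dist_+ \le \dim J^1\pi = mn + m + n$; combined with $\rank\dist_\pm = m$ and $\rank\dist_0 \ge mn + n - m$ this squeezes $\rank\dist_0 = mn + n - m$ and forces $TJ^1\pi = \dist_- \oplus \dist_0 \oplus \dist_+$. In particular $\Lie_{\Gamma_v} S_1^\varphi$ is pointwise diagonalisable with no eigenvalues beyond the three named, which is the assertion. I would also recall the observation made just before the statement that the whole construction depends on $v$ only through the rank-one distribution it spans, so that the theorem is well posed.
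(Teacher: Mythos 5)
Your proposal is correct and follows essentially the same route as the paper, whose proof of this theorem is precisely the chain of computations preceding its statement: the Lemma identifying $\Lie_{\Gamma_v} S_1^\varphi$ with the directional slice of $\fnb{\pdeform, S_1^\varphi}$, the contraction formula for a general $U$, the case-by-case analysis of the zero, same-sign and opposite-sign eigenvalues (with closedness of $\varphi$ entering exactly where you place it, in the solvability of equation~\eqref{Ehorsoln} in adapted coordinates), and the resulting generators $H_\sigma$ of $\dist_-$. Your explicit closing dimension count, forcing $\rank\dist_0 = mn+n-m$ and the direct sum, is left implicit in the paper but is a correct and welcome way to finish.
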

We can, in fact, extend this decomposition by noting that
\begin{equation*}
D_\Gamma \subset \dist_0 \, , \quad \dist_+ \subset V\pi_{1,0} \, , \quad (\dist_0 \cap V\pi_{1,0}) \oplus \dist_+ = V\pi_{1,0}
\end{equation*}
so that we can define another splitting
\begin{equation*}
TJ^1\pi = \dist_- \oplus D_\Gamma \oplus V\pi_{1,0} \tag{B}
\end{equation*}
(Recall that, in the case of ordinary differential equations, the two splittings~(A) and~(B) were identical.) Considering both the splittings we get a refined fourfold splitting
\begin{equation*}
TJ^1\pi = \dist_- \oplus D_\Gamma \oplus (\dist_0 \cap V\pi_{1,0}) \oplus \dist_+ \tag{AB}
\end{equation*} 
into subspaces of dimensions $m$, $n$, $mn-m$, and $m$.


\section{Dual splittings of $T^* J^1\pi$}

Any splitting of the tangent bundle of a manifold into the direct sum of two or more distributions gives rise to a dual splitting of the cotangent bundle, by taking annihilators. In general we shall denote the annihilator of a distribution $\dist$ by the symbol $\dist^\circ$.

Consider first the splitting~(B) of $TJ^1\pi$,
\begin{equation*}
TJ^1\pi = \dist_- \oplus D_\Gamma \oplus V\pi_{1,0} \, ;
\end{equation*}
its dual is the splitting
\begin{equation*}
T^* J^1\pi = \cart_{\pi_1}^\circ \oplus \pi_1^* T^* X \oplus \force 
\end{equation*}
where $\cart_{\pi_1}^\circ$ is spanned by the contact forms $\omega^\sigma$, $\pi_1^* T^* X$ is spanned by the horizontal forms $dx^i$, and the `force bundle' $\force$ is spanned by forms
\begin{equation*}
\psi^\nu_k = dy^\nu_k - F^\nu_{ki} dx^i - H^\nu_{\sigma k} \omega^\sigma \, , \qquad 1 \le \nu \le m, \quad 1 \le k \le n.
\end{equation*}
Thus we may form a local basis of vector fields on $J^1\pi$ adapted to splitting~(B) and its dual local basis of $1$-forms,
\begin{equation*}
\biggl\{ H_\sigma, \Gamma_i, \vf{y^\nu_k} \biggr\} \, , \qquad \bigl\{ \omega^\sigma, dx^i, \psi^\nu_k \bigr\} \, .
\end{equation*}
The three projectors,
\begin{equation*}
\horproj : TJ^1\pi \to \dist_- \, , \quad \pdeform : TJ^1\pi \to D_\Gamma \, , \quad \vertproj : TJ^1\pi \to V\pi_{1,0} \, ,
\end{equation*}
vector-valued $1$-forms satisfying $\horproj + \pdeform + \vertproj = \identity$, are then expressed in terms of these local bases by
\begin{equation*}
\horproj = \omega^\sigma \otimes H_\sigma \, , \qquad \pdeform = dx^i \otimes \Gamma_i \, , \qquad \vertproj = \psi^\nu_k \otimes \vf{y^\nu_k} \, .
\end{equation*}
Refining to the four-fold splitting~(AB)
\begin{equation*}
TJ^1\pi = \dist_- \oplus D_\Gamma \oplus (\dist_0 \cap V\pi_{1,0}) \oplus \dist_+
\end{equation*}
is then simply a matter of decomposing the projector $\vertproj$ into two parts $\vertprojt + \vertproj_+$, corresponding to $V\pi_{1,0} = (\dist_0 \cap V\pi_{1,0}) \oplus \dist_+$. The four-fold dual splitting of $T^* J^1\pi$ is therefore
\begin{equation*}
T^* J^1\pi = \cart_{\pi_1}^\circ \oplus \pi_1^* T^* X \oplus (\dist_+^\circ \cap \force) \oplus \force_+ \, . 
\end{equation*}
Now using adapted coordinates where $\varphi = dx^1$, we may form a local basis of vector fields on $J^1\pi$ adapted to splitting~(AB) and its dual local basis of $1$-forms,
\begin{equation*}
\biggl\{ H_\sigma, \Gamma_i, W^p_\nu, \vf{y^\nu_1} \biggr\} \, , \qquad \bigl\{ \omega^\sigma, dx^i, \psi^\nu_p, v^k \psi^\nu_k \bigr\}
\end{equation*}
where
\begin{equation*}
W^p_\nu = \vf{y^\sigma_p} - v^p \vf{y^\nu_1} \, .
\end{equation*}


\section{Curvature operators}
\label{Scurv}

Any splitting of a tangent bundle into two complementary distributions, one of which is the vertical bundle over a fibration, gives rise to a curvature operator, a vector-valued $2$-form, obtained by taking the \FN\ bracket of one of the two projectors with itself. As the projectors sum to the identity, taking the \FN\ bracket of the other projector with itself, or taking the mixed bracket, will not make any essential difference. This arises when, for example, we take a straightforward two-way splitting of $TJ^1\pi$ as $D_\Gamma \oplus V\pi_1$:\ we obtain
\begin{equation*}
R^\Gamma = \fnb{\pdeform,\pdeform} = dx^i \wedge dx^j \otimes [\Gamma_i, \Gamma_j]
\end{equation*}
where
\begin{equation*}
[\Gamma_i, \Gamma_j] = \bigl( \Gamma_i(F^\nu_{jk}) - \Gamma_j(F^\nu_{ik}) \bigr) \vf{y^\nu_k}
\end{equation*}
takes its values in $V\pi_{1,0} \subset V\pi_1$ (and of course vanishes in the case of ODEs). But with a three-fold or a four-fold splitting, there are more games we can play. As well as the bracket of each projector with itself, we can also take mixed brackets; and in addition we can consider the components of each bracket under the splitting. We list many of these various curvatures in an Appendix; we shall mention just the most important of them here.

Possibly the most significant of these vector-valued $2$-forms is an operator $\Phi$ which we call the `Jacobi curvature' to emphasize its relationship with the Jacobi endomorphism from the ODE case appearing in Equation~\eqref{Jacobi}. We use splitting~(B), and take the component of $\fnb{\pdeform, \horproj}$ in $V\pi_{1,0}$ by writing
\begin{equation*}
\Phi = \vertproj \circ \fnb{\pdeform, \horproj} \, .
\end{equation*}
To see the expression of $\Phi$ in coordinates, we note that
\begin{equation*}
\fnb{\pdeform, \horproj} = \fnb{dx^i \otimes \Gamma_i, \omega^\sigma \otimes H_\sigma}
\end{equation*}
so that
\begin{equation*}
\Phi  = \Phi^\nu_{i \sigma j} dx^i \wedge \omega^\sigma \otimes \vf{y^\nu_j}
\end{equation*}
where
\begin{equation*}
\Phi^\nu_{i \sigma j} = H^\rho_{\sigma i} H^\nu_{\rho j} + \Gamma_i(H^\nu_{\sigma j}) - H_\sigma(F^\nu_{ij})
\end{equation*}
(we have a different sign convention for the functions $H^\nu_{\sigma j}$ from the ODE case, so that the middle term in this expression comes with a positive sign). We also write $\Phi_{i\sigma}$ for the vector field
\begin{equation*}
\Phi_{i\sigma} = \Phi^\nu_{i \sigma j} \vf{y^\nu_j} = [\Gamma_i, H_\sigma] + H^\nu_{\sigma i} H_\nu \, .
\end{equation*} 
We may also consider components of the `vertical curvature' $\fnb{\pdeform, \vertproj}$, giving the following result.
\begin{theorem}
\label{Tcurv}
The component of the vertical curvature $\fnb{\pdeform, \vertproj}$ in $V\pi_{1,0}$ satisfies
\begin{equation*}
\vertproj \circ \fnb{\pdeform, \vertproj} + R^\Gamma + \Phi = 0 \, .
\end{equation*}
\end{theorem}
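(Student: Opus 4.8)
The plan is to prove the identity formally, with no coordinate computation, using three ingredients: the $\R$-bilinearity of the \FN\ bracket, the relation $\horproj + \pdeform + \vertproj = \identity$ between the three projectors, and the vanishing $\fnb{\pdeform, \identity} = 0$. The last is the only input that is not bookkeeping, and it is classical: characterising the bracket through the derivations $d_K = i_K\,d - (-1)^{k-1}d\,i_K$, one has $d_\identity = d$, so $d_{\fnb{\pdeform,\identity}} = d_\pdeform\,d + d\,d_\pdeform = (i_\pdeform d - d\,i_\pdeform)d + d(i_\pdeform d - d\,i_\pdeform) = 0$ by $d^2 = 0$, whence $\fnb{\pdeform,\identity} = 0$ since $L \mapsto d_L$ is injective. (Equivalently, this is the statement that the identity endomorphism has vanishing Nijenhuis torsion, in the form appropriate to a mixed bracket.)

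\textbf{Steps.} First I would substitute $\vertproj = \identity - \pdeform - \horproj$ and expand, using additivity of $\fnb{\pdeform,\cdot}$:
\[
\fnb{\pdeform, \vertproj} = \fnb{\pdeform, \identity} - \fnb{\pdeform, \pdeform} - \fnb{\pdeform, \horproj} = -R^\Gamma - \fnb{\pdeform, \horproj} \, ,
\]
recalling $R^\Gamma = \fnb{\pdeform,\pdeform}$. Second, I would apply the vertical projector $\vertproj$ to both sides. By definition of the Jacobi curvature, $\vertproj \circ \fnb{\pdeform, \horproj} = \Phi$. For the remaining term, recall that $R^\Gamma = dx^i \wedge dx^j \otimes [\Gamma_i,\Gamma_j]$ already takes its values in $V\pi_{1,0}$, so $\vertproj$ fixes it and $\vertproj \circ R^\Gamma = R^\Gamma$. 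Combining, $\vertproj \circ \fnb{\pdeform, \vertproj} = -R^\Gamma - \Phi$, which is exactly the assertion.

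\textbf{Obstacle.} I do not expect a real obstacle here; once $\fnb{\pdeform, \identity} = 0$ is granted the argument is purely formal, and the only subtlety is the harmless one that the bracket is merely $\R$-bilinear, which suffices since the decomposition $\identity = \horproj + \pdeform + \vertproj$ is an identity of honest tensors. If a self-contained verification is preferred, the vanishing $\fnb{\pdeform,\identity}=0$ can instead be checked straight from the defining formula for the \FN\ bracket applied to $\pdeform = dx^i \otimes \Gamma_i$ and to the coordinate form of $\identity$; and in principle one could bypass it altogether by computing $\fnb{\pdeform,\vertproj}$ from $\pdeform$ and $\vertproj = \psi^\nu_k \otimes \partial/\partial y^\nu_k$, projecting onto $V\pi_{1,0}$, and matching against the components of $R^\Gamma$ and $\Phi^\nu_{i\sigma j}$ — but that route is markedly longer and yields no additional insight.
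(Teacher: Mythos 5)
Your argument is correct and is essentially the paper's own proof, which likewise derives the identity from the decomposition $\identity = \horproj + \pdeform + \vertproj$ together with the ($\R$-bilinear) \FN\ bracket, i.e.\ from $\fnb{\pdeform,\identity}=0$ and the facts that $\vertproj\circ\fnb{\pdeform,\horproj}=\Phi$ by definition while $R^\Gamma$ already takes values in $V\pi_{1,0}$. You have merely spelled out the justification of $\fnb{\pdeform,\identity}=0$ that the paper leaves implicit.
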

\begin{proof}
This follows readily from the decomposition of the identity as $\vertproj + \pdeform + \horproj$ and the definition of the \FN\ bracket.
\end{proof}
If we use the more refined splitting~(AB), we may find some additional operators. We may decompose $R^\Gamma$ into two components
\begin{equation*}
\Rt^\Gamma = \vertprojt \circ \fnb{\pdeform, \pdeform} \, , \qquad R^\Gamma_+ = \vertproj_+ \circ \fnb{\pdeform, \pdeform}
\end{equation*}
and similarly we may decompose $\Phi$ into two components,
\begin{equation*}
\Phit = \vertprojt \circ \fnb{\pdeform, \horproj} \, , \qquad \Phi_+ = \vertproj_+ \circ \fnb{\pdeform, \horproj}
\end{equation*}
by using in each case the decomposition of $V\pi_{1,0}$ as $(\dist_0 \cap V\pi_{1,0}) \oplus \dist_+$.

We may also write the vertical curvatures $\fnb{\pdeform, \vertproj}$ as a sum $\fnb{\pdeform, \vertprojt} + \fnb{\pdeform, \vertproj_+}$. We shall write $r_+ = \vertproj_+ \circ \fnb{\pdeform, \vertprojt}$, so that
\begin{equation*}
r_+ = \biggl( v^k \biggl(v^p \pd{F^\sigma_{ik}}{y^\nu_1}  - \pd{F^\sigma_{ik}}{y^\nu_p}  
-  (v^p \delta^1_i - \delta^p_i) H_{\nu k}^\sigma \biggr)  - \delta^\sigma_\nu \pd{v^p}{x^i} \biggr) dx^i \wedge \psi^\nu_p \otimes \vf{y^\sigma_1} \, ,
\end{equation*}
and with this new operator we obtain a refinement of Theorem~\ref{Tcurv}, proved in the same way.
\begin{theorem}
The component of the vertical curvature $\fnb{\pdeform, \vertproj}$ in $\dist_+$ satisfies
\begin{equation*}
\vertproj_+ \circ \fnb{\pdeform, \vertproj_+} + R^\Gamma_+ + \Phi_+ + r_+ = 0 \, .
\end{equation*} \qed
\end{theorem}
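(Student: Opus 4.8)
The plan is to mimic exactly the argument used for Theorem~\ref{Tcurv}, but now working inside the finer splitting~(AB) rather than~(B). First I would record the relevant decompositions of the identity: under splitting~(AB) we have $\identity = \horproj + \pdeform + \vertprojt + \vertproj_+$, and in particular $\vertproj = \vertprojt + \vertproj_+$. The \FN\ bracket is bilinear over $\R$ and, since each projector is a vector-valued $1$-form, antisymmetry gives $\fnb{\pdeform,\identity} = 2\,d\pdeform$ is not needed here; instead the key algebraic identity is that for a vector-valued $1$-form $L$ one has $\fnb{L,L} + 2\fnb{L,\identity - L}/? $ — more precisely, since $\identity = \horproj+\pdeform+\vertproj_+ + \vertprojt$ and $\fnb{\pdeform,\identity}$ is a fixed $2$-form, expanding $\fnb{\pdeform,\identity}$ by additivity in the second slot and then projecting the whole equation onto $\dist_+$ via $\vertproj_+$ yields a linear relation among $\vertproj_+\circ\fnb{\pdeform,\horproj}$, $\vertproj_+\circ\fnb{\pdeform,\pdeform}$, $\vertproj_+\circ\fnb{\pdeform,\vertprojt}$ and $\vertproj_+\circ\fnb{\pdeform,\vertproj_+}$. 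These four terms are, by definition, $\Phi_+$, $R^\Gamma_+$, $r_+$, and $\vertproj_+\circ\fnb{\pdeform,\vertproj_+}$ respectively, so once the identity $\vertproj_+\circ\fnb{\pdeform,\identity} = 0$ is established the theorem follows at once.

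So the real content is the claim that $\vertproj_+\circ\fnb{\pdeform,\identity}$ vanishes. I would argue this precisely as in Theorem~\ref{Tcurv}: because $\pdeform$ is a projector onto $D_\Gamma$ and the identity is parallel to everything, $\fnb{\pdeform,\identity}$ can be computed directly from the coordinate formula of the \FN\ bracket with $\eta$ a $1$-form taken to be the identity-valued form. Concretely, writing $\pdeform = dx^i\otimes\Gamma_i$ and using that $\fnb{\pdeform,\identity}$ is, up to sign, $d\pdeform$ paired appropriately, one checks that $\fnb{\pdeform,\identity} = -dx^i\wedge dx^j\otimes[\Gamma_i,\cdot]$-type terms all of whose vector parts already lie in $V\pi_{1,0}\subset\cart_{\pi_1}$ with the full range of $\partial/\partial y^\nu_k$; then applying $\vertproj_+$, which is zero on $D_\Gamma$ and on $\dist_-$ and on $(\dist_0\cap V\pi_{1,0})$ but nonzero only on the $\dist_+$-component, and observing that $[\Gamma_i,\Gamma_j]$ and the other brackets appearing in $\fnb{\pdeform,\identity}$ are accounted for exactly by $R^\Gamma$, $\Phi$ and the vertical-curvature term — this is really just the statement of Theorem~\ref{Tcurv} re-projected. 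The cleanest route is therefore: start from the already-proved relation $\vertproj\circ\fnb{\pdeform,\vertproj} + R^\Gamma + \Phi = 0$ of Theorem~\ref{Tcurv}, apply $\vertproj_+$ to it, and use $\vertproj_+\circ\vertproj = \vertproj_+$ together with $\vertproj_+\circ R^\Gamma = R^\Gamma_+$, $\vertproj_+\circ\Phi = \Phi_+$, and the splitting $\fnb{\pdeform,\vertproj} = \fnb{\pdeform,\vertprojt} + \fnb{\pdeform,\vertproj_+}$, to obtain $\vertproj_+\circ\fnb{\pdeform,\vertproj_+} + \vertproj_+\circ\fnb{\pdeform,\vertprojt} + R^\Gamma_+ + \Phi_+ = 0$, which is the claim once we identify $r_+ = \vertproj_+\circ\fnb{\pdeform,\vertprojt}$.

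The main obstacle is purely bookkeeping rather than conceptual: one must be sure that $\vertproj_+$ commutes past the composition in the right way, i.e.\ that $\vertproj_+\circ\bigl(\vertproj\circ\fnb{\pdeform,\vertproj}\bigr) = \vertproj_+\circ\fnb{\pdeform,\vertproj}$, which holds because $\vertproj_+$ is a projector with image $\dist_+\subset\mathrm{im}\,\vertproj$ and $\vertproj$ acts as the identity on $\dist_+$; and that no cross term of the form $\vertprojt\circ\fnb{\pdeform,\vertproj_+}$ contributes to the $\dist_+$-component — it does not, since $\vertproj_+$ applied last kills anything landing in $\dist_0\cap V\pi_{1,0}$. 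Given those observations the proof is a two-line reduction to Theorem~\ref{Tcurv}, which is exactly what the phrase ``proved in the same way'' signals; the displayed coordinate formula for $r_+$ is then just the computation of $\vertproj_+\circ\fnb{\pdeform,\vertprojt}$ in the adapted chart $\varphi = dx^1$, using the basis $\{H_\sigma,\Gamma_i,W^p_\nu,\partial/\partial y^\nu_1\}$ and its dual $\{\omega^\sigma,dx^i,\psi^\nu_p,v^k\psi^\nu_k\}$ already introduced.
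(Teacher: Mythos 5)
Your final reduction is correct and is essentially the paper's own argument: the paper proves this ``in the same way'' as Theorem~\ref{Tcurv}, i.e.\ by projecting the decomposition $\fnb{\pdeform,\identity}=\fnb{\pdeform,\horproj}+\fnb{\pdeform,\pdeform}+\fnb{\pdeform,\vertprojt}+\fnb{\pdeform,\vertproj_+}=0$ onto $\dist_+$, which is exactly what your application of $\vertproj_+$ to Theorem~\ref{Tcurv} (using $\vertproj_+\circ\vertproj=\vertproj_+$ and the definitions of $R^\Gamma_+$, $\Phi_+$, $r_+$) accomplishes. The exploratory middle portion of your write-up contains some garbled algebra that you should delete, but the two-line argument you settle on is sound and matches the intended proof.
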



\section{The second order approach}

In the case of ODEs a second order approach to the tangent bundle decomposition yields the same result as the first order approach. For PDEs, though, the situation is slightly different. We may again start with a nonvanishing closed $1$-form $\varphi$ on $X$ in order to take advantage of the corresponding $\varphi$-vertical endomorphism $S_2^\varphi$, but we shall see that for each system of PDEs and each $1$-form $\varphi$ there is a condition, of a local nature, which must be satisfied in order to construct a unique splitting. If the condition does not hold then we can still find a splitting, but in the absence of further data the construction will not be unique.

We now consider, as before, the projections
\begin{equation*}
J^1\pi \xrightarrow{\pi_{1,0}} Y \xrightarrow{\pi} X
\end{equation*}
and the filtration
\begin{equation*}
V\pi_{1,0} \subset V\pi_1 \subset TJ^1\pi \, ;
\end{equation*}
given a second order connection $\Gamma$ we take, as before, $D_\Gamma$ to be the distribution spanned locally by the vector fields
\begin{equation*}
\Gamma_i = \vf{x^i} + y^\sigma_i \vf{y^\sigma} + F^\sigma_{ij} \vf{y^\sigma_j}
\end{equation*}
with, once again, $F^\sigma_{ij} = y^\sigma_{ij} \circ \Gamma$, so that $TJ^1\pi = D_\Gamma \oplus V\pi_1$. We shall now attempt to define a distribution $D_-$ satisfying $TJ^1\pi = D_- \oplus V\pi_{1,0}$, giving a splitting
\begin{equation*}
TJ^1\pi = (D_- \cap V\pi_1) \oplus D_\Gamma \oplus V\pi_{1,0} \, , \tag{C}
\end{equation*}
in such a way that $D_- \cap V\pi_1 = \dist_-$, so that splittings~(B) and~(C) are identical.

To construct $D_-$ we mimic the approach taken for ODEs. We must, as in the first order approach, assume given a nonvanishing closed $1$-form $\varphi$ on $X$, so that we can use the second order $\varphi$-vertical endomorphism $S_2^\varphi$. The image of the second order connection $\Gamma(J^1\pi)$ may be regarded as a submanifold of $J^2\pi$ defined locally by the equations $y^\sigma_{ij} = F^\sigma_{ij}$,  and so the annihilator $T^\circ (\Gamma(J^1\pi)) \subset T^*_{\Gamma(J^1\pi)} J^2\pi$ of its tangent space is a codistribution along the image, spanned locally by the $1$-forms
\begin{equation*}
dy^\sigma_{ij} - dF^\sigma_{ij} \, .
\end{equation*}
Operating on this codistribution with $S_2^\varphi$ gives a new codistribution along the image, now spanned locally by the $1$-forms
\begin{align*}
\lefteqn{S_2^\varphi(dy^\sigma_{ij} - dF^\sigma_{ij})} \\
& = \tfrac{1}{2} n(ij) \biggl( \pd{\varphi_i}{x^j} +  \pd{\varphi_j}{x^i} \biggr) \omega^\sigma
- \varphi_k  \pd{F^\sigma_{ij}}{y^\nu_k} \omega^\nu +  \varphi_i \omega^\sigma_j  +  \varphi_j \omega^\sigma_i
\end{align*}
where in the calculation we use the relationship
\begin{equation*}
\pd{y^\sigma_{ij}}{y^\nu_{kl}} = \tfrac{1}{2} n(ij) \delta^\sigma_\nu ( \delta^k_i \delta^l_j + \delta^l_i \delta^k_j)
\end{equation*}
to take account of the symmetry $y^\sigma_{ij} = y^\sigma_{ji}$. Continuing as in the ODE case, we take the pullback codistribution $\Gamma^* S_2^\varphi \bigl( T^\circ (\Gamma(J^1\pi)) \bigr) \subset T^* J^1\pi$, spanned locally by the $1$-forms
\begin{align}
\psi^\sigma_{ij} & = \Gamma^* S_2^\varphi(dy^\sigma_{ij} - dF^\sigma_{ij}) \nonumber \\
& = \tfrac{1}{2} n(ij) \biggl( \pd{\varphi_i}{x^j} +  \pd{\varphi_j}{x^i} \biggr) \omega^\sigma
- \varphi_k \pd{F^\sigma_{ij}}{y^\nu_k} \omega^\nu +  \varphi_i \ombar^\sigma_j  +  \varphi_j \ombar^\sigma_i \, , \label{psicoord}
\end{align}
where $\ombar^\sigma_i = dy^\sigma_i - F^\sigma_{ik} dx^k$. Finally we let $D_-$ be the distribution satisfying
\begin{equation*}
D_-^\circ = \Gamma^* S_2^\varphi \bigl( T^\circ \Gamma(J^1\pi) \bigr) \, ;
\end{equation*}
we shall show that $D_\Gamma \subset D_-$, and we shall consider the circumstances where we can write $D_- \oplus V\pi_{1,0} = TJ^1\pi$.

Before doing this, though, we note that the awkwardness of the coordinate expression~(\ref{psicoord}) above can be reduced by using adapted coordinates on $X$ where $\varphi = dx^1$, as in the first order approach. In these adapted coordinates, we will therefore have
\begin{equation*}
\psi^\sigma_{11} = - \pd{F^\sigma_{11}}{y^\nu_1} \omega^\nu + 2 \ombar^\sigma_1 \, , \qquad
\psi^\sigma_{p1} = - \pd{F^\sigma_{p1}}{y^\nu_1} \omega^\nu + \ombar^\sigma_p \, , \qquad
\psi^\sigma_{pq} = - \pd{F^\sigma_{pq}}{y^\nu_1} \omega^\nu \, .
\end{equation*}
To check the properties of $D_-$, suppose that
\begin{equation*}
U = U^i \vf{x^i} + U^\sigma \vf{y^\sigma} + U^\sigma_1 \vf{y^\sigma_1} + U^\sigma_p \vf{y^\sigma_p} \in D_- \, ;
\end{equation*}
then
\begin{subequations}
\label{Brackets}
\begin{align}
- \pd{F^\sigma_{11}}{y^\nu_1} (U^\nu - y^\nu_i U^i) + 2 (U^\sigma_1 - F^\sigma_{1k} U^k) & = 0 \\
- \pd{F^\sigma_{p1}}{y^\nu_1} (U^\nu - y^\nu_i U^i) + (U^\sigma_p - F^\sigma_{pk} U^k) & = 0  \\
- \pd{F^\sigma_{pq}}{y^\nu_1} (U^\nu - y^\nu_i U^i) & = 0 \, . \label{unwanted}
\end{align}
\end{subequations}
We see first that if $U^\sigma = y^\sigma_i U^i$ and $U^\sigma_j = F^\sigma_{ij} U^i$ then all three sets of equations are satisfied, so that $D_\Gamma \subset D_-$; and then we see that if the equations are satisfied and $U^i = U^\sigma = 0$ then $U^\sigma_j = 0$, so that $D_- \cap V\pi_{1,0} = 0$. We must now see whether $\rank D_- = m+n$, so that we can write $TJ^1\pi = D_- \oplus V\pi_{1,0}$.

Suppose that the $1$-form $\varphi$ satisfies the `compatibility condition' that, for any vector field $v$ on $M$ satisfying $i_v\varphi = 0$ and for any vector field $W$ on $J^1\pi$ taking values in $S_1^\varphi(TJ^1\pi)$, the Lie bracket $[W,\Gamma_v]$ must also take its values in $S_1^\varphi(TJ^1\pi)$. If $v = v^i \p / \p x^i$ where $v^i \varphi_i = 0$, and if $W = W^\sigma \varphi_j \p / \p y^\sigma_j$, then
\begin{equation*}
[W,\Gamma_v] = v^p W^\nu \biggl( \pd{F^\sigma_{p1}}{y^\nu_1} \vf{y^\sigma_1} 
+ \pd{F^\sigma_{pq}}{y^\nu_1} \vf{y^\sigma_q} \biggr) - \Gamma_v(W^\nu) \vf{y^\nu_1} \, ,
\end{equation*}
so we see that the compatibility condition is satisfied exactly when $\p F^\sigma_{pq} / \p y^\nu_1 = 0$, in other words when equations~\eqref{unwanted} vanish. We therefore have the following result.
\begin{theorem}
Given a second order connection $\Gamma$ and a nonvanishing closed $1$-form $\varphi$ on $X$ satisfying the compatibility condition, the distribution $D_-$ specified by the annihilator condition $D_-^\circ = \Gamma^* S_2^\varphi \bigl( T^\circ (\Gamma(J^1\pi)) \bigr)$ has rank $m+n$ and gives rise to the tangent bundle decomposition $TJ^1\pi = (D_- \cap V\pi_1) \oplus D_\Gamma \oplus V\pi_{1,0}$. This is the same as splitting~(B) but is independent of any choice of vector field on $X$.
\end{theorem}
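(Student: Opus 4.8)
The plan is to observe that the substance of the argument is already in place: the relations~\eqref{Brackets} show that $D_\Gamma \subset D_-$ and that $D_- \cap V\pi_{1,0} = 0$, while the compatibility condition has been identified with the vanishing of the functions $\pd{F^\sigma_{pq}}{y^\nu_1}$, equivalently of the equations~\eqref{unwanted}. What remains is to compute $\rank D_-$, to assemble the direct sum, to identify $D_- \cap V\pi_1$ with the eigendistribution $\dist_-$ of splitting~(A), and to note the absence of any dependence on $v$. I would work throughout in the adapted coordinates with $\varphi = dx^1$.

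By construction $D_-^\circ$ is spanned by the $1$-forms $\psi^\sigma_{ij}$ of~\eqref{psicoord}, which in these coordinates are the forms $\psi^\sigma_{11}$, $\psi^\sigma_{p1}$ and $\psi^\sigma_{pq}$ displayed just after~\eqref{psicoord}. The compatibility condition makes $\psi^\sigma_{pq} = - \pd{F^\sigma_{pq}}{y^\nu_1} \omega^\nu$ vanish identically, leaving $D_-^\circ$ spanned by the $mn$ forms $\psi^\sigma_{11}$ and $\psi^\sigma_{p1}$. Written in the cobasis $(dx^i, \omega^\sigma, \ombar^\sigma_i)$ of $T^*J^1\pi$, these have $\ombar$-components $2\ombar^\sigma_1$ and $\ombar^\sigma_p$ respectively; since the $\ombar^\sigma_i$ form part of a basis, the $mn$ forms are linearly independent. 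Hence $\rank D_-^\circ = mn$ and $\rank D_- = (n+m+mn) - mn = m+n$.

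Since $\rank D_- + \rank V\pi_{1,0} = (m+n) + mn = \dim J^1\pi$ and $D_- \cap V\pi_{1,0} = 0$, the sum $D_- \oplus V\pi_{1,0}$ exhausts $TJ^1\pi$. Using $D_\Gamma \subset D_-$, $D_\Gamma \cap V\pi_1 = 0$ and $TJ^1\pi = D_\Gamma \oplus V\pi_1$ one gets $D_- = D_\Gamma \oplus (D_- \cap V\pi_1)$ with $\rank(D_- \cap V\pi_1) = m$, and therefore $TJ^1\pi = (D_- \cap V\pi_1) \oplus D_\Gamma \oplus V\pi_{1,0}$, which is splitting~(C). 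Solving~\eqref{Brackets} for a vector $U \in D_-$ with $U^i = 0$ (the equations~\eqref{unwanted} now being vacuous) forces $U^\sigma_1 = \tfrac{1}{2}\pd{F^\sigma_{11}}{y^\nu_1} U^\nu$ and $U^\sigma_p = \pd{F^\sigma_{p1}}{y^\nu_1} U^\nu$, so $D_- \cap V\pi_1$ is spanned by the vector fields $\vf{y^\sigma} + \tfrac{1}{2}\pd{F^\nu_{11}}{y^\sigma_1}\vf{y^\nu_1} + \pd{F^\nu_{p1}}{y^\sigma_1}\vf{y^\nu_p}$. Comparing with the generators $H_\sigma$ of $\dist_-$ found above in the proof of splitting~(A), and discarding, by the compatibility condition, the terms involving $\pd{F^\nu_{pq}}{y^\sigma_1}$ there, the two sets of generators coincide; hence $D_- \cap V\pi_1 = \dist_-$ and splittings~(B) and~(C) agree.

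Finally, the distribution $D_-$ is built from $S_2^\varphi$, the annihilator $T^\circ(\Gamma(J^1\pi))$ and the pullback along $\Gamma$, none of which involves a vector field on $X$, so $D_-$, and with it $\dist_-$, is manifestly independent of the choice of $v$. I expect no serious obstacle here: the theorem is largely bookkeeping on top of the preceding lemmas. The one point deserving care is that the hypothesis $\pd{F^\sigma_{pq}}{y^\nu_1} = 0$ is used for two distinct purposes — to annihilate the surplus $1$-forms $\psi^\sigma_{pq}$, which is exactly what pins $\rank D_-$ at $m+n$ rather than something smaller, and to strip the $v$-dependence from the generators of $D_- \cap V\pi_1$ so that they match those of $\dist_-$ — and one should verify that precisely this condition suffices for both.
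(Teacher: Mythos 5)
Your proof is correct and follows essentially the same route as the paper: both arguments use the adapted coordinates $\varphi = dx^1$, the reduction of the compatibility condition to $\p F^\sigma_{pq}/\p y^\nu_1 = 0$, and the explicit generators $H_\sigma$ with $H^\nu_{\sigma 1} = \tfrac{1}{2}\p F^\nu_{11}/\p y^\sigma_1$, $H^\nu_{\sigma p} = \p F^\nu_{1p}/\p y^\sigma_1$, compared against the $v$-dependent generators of $\dist_-$ from splitting~(A). You merely make explicit the rank count (via the $mn$ surviving independent forms $\psi^\sigma_{11}$, $\psi^\sigma_{p1}$) that the paper asserts with ``we see that'' and ``it is immediate''.
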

\begin{proof}
As the compatibility condition is satisfied, we see that $D_-$ is spanned locally by $D_\Gamma$ together with the $m$ linearly independent vector fields
\begin{equation}
H_\sigma = \vf{y^\sigma} + H^\nu_{\sigma k} \vf{y^\nu_k} \, , \qquad H^\nu_{\sigma k} = \tfrac{1}{2} n(1k) \pd{F^\nu_{1k}}{y^\sigma_1} \, . \label{horvf}
\end{equation}
It is immediate that the distribution spanned by the vector fields $H_\sigma$ is just $\dist_-$ for any compatible choice of vector field $v$.
\end{proof}
There is, however, no guarantee that a given second order connection will have any compatible $1$-forms, and so we must consider how to deal with this possibility. One approach might be to return to the codistribution $D_-^\circ$, spanned locally by the $1$-forms $\psi^\sigma_{ij}$, and replace this by a smaller codistribution spanned locally by some linear combinations of these $1$-forms. For example, if $X$ supports a Riemannian metric (or indeed a conformal class of such metrics) then we may choose a representative metric $g$ normalized with respect to the $1$-form $\varphi$ (so that $\gh(\varphi,\varphi) = 1$ where $\gh$ is the corresponding contravariant metric) and then the $1$-forms
\begin{equation*}
\psi^\sigma_1 = \psi^\sigma_{11} + \tfrac{1}{2} g^{1p} g^{1q} \psi^\sigma_{pq} \, , \qquad
\psi^\sigma_p = \psi^\sigma_{1p} - g^{1q} \psi^\sigma_{pq} 
\end{equation*}
specify a well-defined codistribution $D_g^\circ$ such that $D_g$ has rank $m+n$ and gives rise to a splitting $TJ^1\pi = (D_g \cap V\pi_1) \oplus D_\Gamma \oplus V\pi_{1,0}$. In fact this recovers the splitting~(B) obtained using the first order approach, because given the $1$-form $\varphi$ we are automatically provided with a vector field $v = g^\sharp(\varphi)$, where $g^\sharp$ is the metric operation of `raising the index', so that $v^i = g^{ij} \varphi_j$ in general coordinates, and $v^i = g^{i1}$ in adapted coordinates. If we scale the metric so that $g^{11} = 1$, we see that $i_v\varphi = 1$ and that the distribution $\dist_-$ corresponding to an eigenvalue of $-1$ is now spanned by the vector fields
\begin{gather*}
H_\sigma = \vf{y^\sigma} + H^\nu_{\sigma k} \vf{y^\nu_k} \\
H^\nu_{\sigma 1} = \tfrac{1}{2} \biggl( \pd{F^\nu_{11}}{y^\sigma_1} - g^{1p} g^{1q} \pd{F^\nu_{pq}}{y^\sigma_1} \biggr) \, , \qquad
H^\nu_{\sigma q} = \biggl( \pd{F^\nu_{1q}}{y^\sigma_1} + g^{1p} \pd{F^\nu_{pq}}{y^\sigma_1} \biggr)
\end{gather*}
and is just $D_g \cap V\pi_1$.


\section{Examples}

For our first example we consider a possible generalization of a result about the separability of systems of second order ODEs.

In~\cite{Canetal96} some necessary and sufficient conditions for separability of ODEs were given, two of which were that a curvature $R$ (whose components were essentially commutators of horizontal vector fields) should vanish, and that the Jacobi endomorphism $\Phi$ should be diagonalizable. For the PDE case we shall show that, if a second order connection is separable in the same sense, than similar conditions hold; a topic for future investigation would be to find further conditions which would then be sufficient as well as necessary.

We start with a system of second-order PDEs in the form
\begin{equation*}
\pdb{\phi^\sigma}{x^i}{x^j} = F^\sigma_{ij} \biggl( \phi^\nu, \pd{\phi^\nu}{x^k} \biggr)
\end{equation*}
so that the functions $F^\sigma_{ij}$ do not depend on the variables $x^k$. We suppose that, in these coordinates, the equations are separable in the sense that $F^\sigma_{ij}$ does not depend on $\phi^\nu$ or on $\partial \phi^\nu / \partial x^k$ for $\nu \ne \sigma$.

Let $\varphi = dx^1$ and let $v$ satisfy $v^1 = 1$, with the other components of $v$ as yet undetermined. Considering the vector fields
\begin{equation*}
H_\sigma = \vf{y^\sigma} + H^\nu_{\sigma k} \vf{y^\nu_k}
\end{equation*}
we therefore have $H^\nu_{\sigma k} = 0$ for $\nu \ne \sigma$, and (with implicit sums over repeated indices $p$, $q$ but with no sum over the repeated index $\sigma$)
\begin{align*}
H^\sigma_{\sigma 1} & = \tfrac{1}{2} \biggl( \pd{F^\sigma_{11}}{y^\sigma_1} - v^p v^q \pd{F^\sigma_{pq}}{y^\sigma_1} \biggr) \\
H^\sigma_{\sigma q} & = \pd{F^\sigma_{1q}}{y^\sigma_1} + v^p \pd{F^\sigma_{pq}}{y^\sigma_1} \, .
\end{align*}

We now take `directional slices' in the domain by choosing particular vector fields $v$. Set
\begin{equation*}
v_1 = (1, 0, 0, \ldots, 0) \, , \qquad v_p = (1, 0, \ldots, 0, 1, 0, \ldots, 0)
\end{equation*}
where the nonzero components of $v_p$ are $v_p^1$ and $v_p^p$. Using a superscript in parentheses to indicate the slice, we then have
\begin{equation*}
\slice{1}H^\sigma_{\sigma 1} = \tfrac{1}{2} \pd{F^\sigma_{11}}{y^\sigma_1} \, , \qquad
\slice{p}H^\sigma_{\sigma 1} = \tfrac{1}{2} \biggl( \pd{F^\sigma_{11}}{y^\sigma_1} - \pd{F^\sigma_{pp}}{y^\sigma_1} \biggr)
\end{equation*}
and
\begin{equation*}
\slice{1}H^\sigma_{\sigma q} = \pd{F^\sigma_{1q}}{y^\sigma_1} \, , \qquad
\slice{p}H^\sigma_{\sigma q} = \pd{F^\sigma_{1q}}{y^\sigma_1} + \pd{F^\sigma_{pq}}{y^\sigma_1} \, .
\end{equation*}
Now for any slice $(i)$ we have
\begin{equation*}
[\slice{i}H_\sigma, \slice{i}H_\rho] 
= \bigl( \slice{i}H_\sigma(\slice{i}H^\nu_{\rho k}) - \slice{i}H_\rho(\slice{i}H^\nu_{\sigma k}) \bigr) \vf{y^\nu_k}
\end{equation*}
(sum over $\nu$, $k$ on the right-hand side) so that, as
\begin{equation*}
\slice{i}H_\sigma(\slice{i}H^\nu_{\rho k}) = 0
\end{equation*}
we see that $[\slice{i}H_\sigma, \slice{i}H_\rho] = 0$ and hence that $\slice{i}R^H = 0$. In addition, when considering the Jacobi curvature $\Phi$, we have in general
\begin{equation*}
\Phi^\nu_{i\sigma j} = H^\rho_{\sigma i} H^\nu_{\rho j} + \Gamma_i(H^\nu_\sigma j) - H_\sigma(F^\nu_{ij}) \, ,
\end{equation*}
but for separable systems $\Phi^\nu_{i\sigma j} = 0$ when $\nu \ne \sigma$, so that (with no sum over the repeated index $\sigma$)
\begin{equation*}
\Phi^\sigma_{i\sigma j} = H^\sigma_{\sigma i} H^\sigma_{\sigma j} + \Gamma_i(H^\sigma_\sigma j) - H_\sigma(F^\sigma_{ij}) \, .
\end{equation*}
Thus, for each pair $(i,j)$, $\Phi$ is diagonal with the only nonzero components being $\Phi^\sigma_{i\sigma j}$. In summary, therefore, we have the following result.
\begin{prop}
Let $\Gamma$ be a second order connection and suppose that, in coordinates $(x^k, y^\sigma, y^\sigma_i, y^\sigma_{ij})$, the functions $F^\sigma_{ij} = y^\sigma_{ij} \circ \Gamma$ do not involve the independent variables $x^k$. If the corresponding second order PDEs are separable in these coordinates then $R^H = 0$, and in addition $\Phi$ is diagonal for each coordinate pair $(i,j)$. \qed
\end{prop}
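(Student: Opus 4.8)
The plan is to extract from the separability hypothesis the structural fact that each horizontal vector field $H_\sigma$ is \emph{diagonal}: its vertical part involves only the variables $y^\sigma_k$ carrying the same fibre index, and the surviving coefficient $H^\sigma_{\sigma k}$ is a function of $y^\sigma$ and the $y^\sigma_l$ alone. Both conclusions then follow by direct inspection of the defining formulas already set out above.

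First I would record the consequences of separability for the connection coefficients $H^\nu_{\sigma k}$. Since by hypothesis $F^\nu_{ij}$ does not involve $y^\sigma$ or $y^\sigma_k$ when $\nu \neq \sigma$, the formulas $H^\nu_{\sigma 1} = \tfrac12(\partial F^\nu_{11}/\partial y^\sigma_1 - v^p v^q\,\partial F^\nu_{pq}/\partial y^\sigma_1)$ and $H^\nu_{\sigma q} = \partial F^\nu_{1q}/\partial y^\sigma_1 + v^p\,\partial F^\nu_{pq}/\partial y^\sigma_1$ give $H^\nu_{\sigma k} = 0$ whenever $\nu \neq \sigma$; and since $F^\sigma_{ij}$ involves neither the $x^k$ nor the $y^\mu, y^\mu_l$ with $\mu \neq \sigma$, the remaining coefficient $H^\sigma_{\sigma k}$ depends only on $y^\sigma$ and the $y^\sigma_l$ (and on the chosen $v$, which enters no $y$-derivative below). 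Hence $H_\sigma = \vf{y^\sigma} + H^\sigma_{\sigma l}\vf{y^\sigma_l}$, and this operator acts only within the $y^\sigma$-block of coordinates.

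To obtain $R^H = 0$ I would compute the Lie bracket $[H_\sigma, H_\rho]$, whose $\vf{y^\nu_k}$-component is $H_\sigma(H^\nu_{\rho k}) - H_\rho(H^\nu_{\sigma k})$. This vanishes trivially for $\sigma = \rho$; for $\sigma \neq \rho$ the first term is nonzero only if $\nu = \rho$, but then $H^\rho_{\rho k}$ depends only on $y^\rho, y^\rho_l$ and is annihilated by $H_\sigma$, which differentiates only in the $y^\sigma$-directions, and the second term vanishes symmetrically; so $[H_\sigma, H_\rho] = 0$ for all $\sigma,\rho$ and the horizontal curvature is identically zero. For the diagonality of $\Phi$ I would examine the three terms of $\Phi^\nu_{i\sigma j} = H^\rho_{\sigma i} H^\nu_{\rho j} + \Gamma_i(H^\nu_{\sigma j}) - H_\sigma(F^\nu_{ij})$ separately: when $\nu \neq \sigma$ the first vanishes because $H^\rho_{\sigma i}$ forces $\rho = \sigma$ and then $H^\nu_{\sigma j}$ forces $\nu = \sigma$; the second vanishes because $H^\nu_{\sigma j}$ is already zero; and the third vanishes because $F^\nu_{ij}$ depends only on $y^\nu, y^\nu_k$ while $H_\sigma$ with $\sigma \neq \nu$ differentiates only in the $y^\sigma$-block. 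Thus for each coordinate pair $(i,j)$ the only components of $\Phi$ that can survive are the $\Phi^\sigma_{i\sigma j}$, which is precisely the assertion that $\Phi$ is diagonal in the fibre indices.

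I expect no serious obstacle: once the diagonal structure of the $H_\sigma$ is established the remainder is inspection. The one point deserving a moment's care is checking that the possible dependence of the coefficients $H^\sigma_{\sigma k}$ on the auxiliary vector field $v$ (and hence, in general coordinates, possibly on $x$) cannot interfere — but $v$ is a vector field on $X$, so it contributes nothing to the $y$-differentiations that drive both halves of the argument, and the conclusions are unaffected.
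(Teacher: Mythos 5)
Your argument is correct and is essentially the paper's own: both proofs reduce to observing that separability forces $H^\nu_{\sigma k}=0$ for $\nu\ne\sigma$ and makes $H^\sigma_{\sigma k}$ and $F^\sigma_{ij}$ functions of the $y^\sigma$-block alone, after which $[H_\sigma,H_\rho]=0$ and the off-diagonal components of $\Phi^\nu_{i\sigma j}$ vanish by inspection. The only cosmetic difference is that the paper evaluates the coefficients on specific constant directional slices $v_1,v_p$ while you keep $v$ general, and your term-by-term check of the three contributions to $\Phi^\nu_{i\sigma j}$ (and of the $\sigma=\rho$ case of the bracket) is in fact slightly more explicit than the paper's.
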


For our second example we consider certain PDEs associated with harmonic maps. If $(M,g)$ and $(N,h)$ are Riemannian manifolds than a map $\phi : M \to N$ is harmonic if its tension field $\tau(\phi)$ vanishes~\cite{HeWo08}. This equation is given in local coordinates $(x^i)$ on $M$ and $(y^\sigma)$ on $N$ as
\begin{equation*}
\tau(\phi)^\rho 
= g^{ij} \biggl( \pdb{\phi^\rho}{x^i}{x^j} - {}^g\Gamma^k_{ij} \pd{\phi^\rho}{x^k} 
+ {}^h\Gamma^\rho_{\sigma\nu}(\phi) \pd{\phi^\sigma}{x^i} \pd{\phi^\nu}{x^j} \biggr) = 0
\end{equation*}
where ${}^g\Gamma^k_{ij}$ and ${}^h\Gamma^\rho_{\sigma\nu}$ denote the Christoffel symbols of $g$ and $h$ respectively. A sufficient condition for $\phi$ to be harmonic is therefore that
\begin{equation}
\label{Harminv}
\pdb{\phi^\rho}{x^i}{x^j} - F^\rho_{ij} = \pdb{\phi^\rho}{x^i}{x^j} - {}^g\Gamma^k_{ij} \pd{\phi^\rho}{x^k} + {}^h\Gamma^\rho_{\sigma\nu}(\phi) \pd{\phi^\sigma}{x^i} \pd{\phi^\nu}{x^j} = 0
\end{equation}
for any index $\rho$ and any symmetric pair of indices $(i,j)$. (In the case of immersions $\phi$ the quantities $\p^2 \phi^\rho / \p x^i \p x^j - F^\rho_{ij}$ are differential invariants~\cite{MuKr03}.)

To analyse equations~\eqref{Harminv} in the present context put $X = M$ and $Y = M \times N$, and let $\pi : Y \to X$ be projection on the first factor. The equations may then be written as
\begin{equation*}
F^\rho_{ij} = {}^g\Gamma^k_{ij}(x) y^\rho_k - {}^h\Gamma^\rho_{\sigma\nu}(y) y^\sigma_i y^\nu_j \, ,
\end{equation*}
in other words as the coordinate equations corresponding to a second order connection $\Upsilon : J^1\pi \to J^2\pi$.

We may apply the first order approach to finding a (local) decomposition of the tangent bundle $TJ^1\pi$ by fixing a point $x \in X$ and taking Riemannian normal coordinates about $x$, letting $x^1$ be the radial coordinate and setting $\varphi = dx^1$. We must also choose a vector field $v$ on $X$, and as mentioned before we choose $v = g^\sharp(\varphi)$, so that $v = \p / \p x^1$. We now have
\begin{align*}
H_\sigma & = \vf{y^\sigma} + \Bigl( \tfrac{1}{2} \delta^\rho_\sigma \; {}^g\Gamma^1_{11}(x) 
- {}^h\Gamma^\rho_{\sigma\nu}(y) y^\nu_1 \Bigr) \vf{y^\rho_1} \\
& \qquad + \Bigl( \delta^\rho_\sigma \; {}^g\Gamma^1_{1q}(x) - {}^h\Gamma^\rho_{\sigma\nu}(y) y^\nu_q \Bigr) \vf{y^\rho_q} \\
& = \vf{y^\sigma} - {}^h\Gamma^\rho_{\sigma\nu}(y)  y^\nu_k \vf{y^\rho_k} 
\end{align*}
where we have used the fact that, in normal coordinates, $g_{11} = 1$ and $g_{1q} = 0$ so that ${}^g\Gamma^1_{1j} = 0$.

We may also compute the curvatures $R^\Gamma$ and $R^H$ (see the Appendix), the Jacobi curvature $\Phi$ and the vertical curvature $r_+$. We find that
\begin{equation*}
R^\Gamma = \curv^\nu_{ijk} dx^i \wedge dx^j \otimes \vf{y^\nu_k} \, , \qquad 
R^H = \curv^\nu_{\sigma\rho k} \omega^\sigma \wedge \omega^\rho \otimes \vf{y^\nu_k}
\end{equation*}
and that
\begin{equation*}
\Phi = \Phi^\sigma_{i\nu k} dx^i \wedge \omega^\nu \otimes \vf{y^\sigma_k} \, , \qquad 
r_+ = r^{p\sigma}_{i\nu} dx^i \wedge \psi^\nu_p \otimes \vf{y^\sigma_1}
\end{equation*}
where
\begin{align*}
\curv^\nu_{ijk} & = {}^g \! R^l_{kij} y^\nu_l + {}^h \! R^\nu_{\rho\mu\sigma} y^\sigma_i y^\mu_j y^\rho_k \\
\curv^\nu_{\sigma\rho k} & = {}^h\! R^\nu_{\mu\rho\sigma} y^\mu_k \\
\Phi^\rho_{i\nu k} & = {}^h \!R^\rho_{\mu\nu\sigma}y^\sigma_i y^\mu_k \\
r^{p\sigma}_{i\nu} & = - \, {}^g\Gamma^p_{i1}(x) \delta^\sigma_\nu
\end{align*}
and where ${}^g \! R^l_{kij}$ and ${}^h \! R^\rho_{\nu\mu\sigma}$ are the Riemannian curvature tensors of the two metrics $g$ and $h$. Note that, for our particular choice of $\varphi$ as the differential of the radial coordinate about some point, both $R^H$ and $\Phi$ are independent of the metric $g$ on $M$, whereas $r_+$ is independent of the metric $h$ on $N$.

For our final example we consider a lemniscate with oscillating amplitude, given by
\begin{equation}
\label{Ex1a}
r^2 = a^2(t) \cos 2\theta \, , \qquad \ddot{a} = -a \, .
\end{equation} 
Here we treat $r$ as the dependent variable and $(t,\theta)$ as independent variables. Differentiating and eliminating $a(t)$ and $\theta$ gives
\begin{equation*}
r_{\theta\theta} = -2r - \frac{r_\theta^2}{r} \, , \qquad r_{t\theta} = \frac{r_t r_\theta}{r} \, , \qquad r_{tt} = -r \, .
\end{equation*}
These three PDEs describe a submanifold of $J^2\pi$ ($\pi : Y \to X$) which properly contains the prolonged solution~(\ref{Ex1a}). In terms of the second order connection $\Gamma$ we therefore have
\begin{equation*}
\Gamma_{\theta\theta} = r_{\theta\theta} \circ \Gamma = - 2r - \frac{r_\theta^2}{r} \, , \qquad
\Gamma_{t\theta} = r_{t\theta} \circ \Gamma = \frac{r_t r_\theta}{r} \, , \qquad 
\Gamma_{tt} = r_{tt} \circ \Gamma = -r
\end{equation*}
so that the vector fields $\Gamma_1$, $\Gamma_2$ are given by
\begin{align*}
\Gamma_1 & = \vf{t} + r_t \vf{r} - r \vf{r_t} + \frac{r_t r_\theta}{r} \vf{r_\theta} \\
\Gamma_2 & = \vf{\theta} + r_\theta \vf{r} + \frac{r_t r_\theta}{r} \vf{r_t} - \biggl( 2r + \frac{r_\theta^2}{r} \biggr) \vf{r_\theta} \, . 
\end{align*}
The curvature $R^\Gamma$ is given by
\begin{equation*}
R^\Gamma = dt \wedge d\theta \otimes [\Gamma_1, \Gamma_2] \, ;
\end{equation*}
as
\begin{equation*}
[\Gamma_1, \Gamma_2] = \bigl( \Gamma_1(\Gamma_{\theta t}) - \Gamma_2(\Gamma_{tt}) \bigr) \vf{r_t}
+ \bigl( \Gamma_1(\Gamma_{\theta\theta}) - \Gamma_2(\Gamma_{t\theta}) \bigr) \vf{r_\theta}
= 0 \, ,
\end{equation*}
we see that $R^\Gamma$ vanishes.

The other curvatures involve specific choices of the $1$-form $\varphi$ and vector field $v$. We shall compute the curvature components $\Phi^\rho_{i\sigma j}$ where necessarily $\rho = \sigma = r$, so that we may write these components as $\Phi_{ij}$, and they are given by
\[
\Phi_{ij} = H^r_{ri} H^r_{rj} + \Gamma_i(H^r_{rj}) - H_r(\Gamma_{ij}) \, .
\]
If we take $\varphi = dt$ and $v = \partial / \partial t$ then
\begin{equation*}
H^t_r = \vf{r} + \tfrac{1}{2} \pd{\Gamma_{tt}}{r_t} \vf{r_t} + \pd{\Gamma_{t\theta}}{r_t} \vf{r_\theta}
= \vf{r} + \frac{r_\theta}{r} \vf{r_\theta} \, ,
\end{equation*}
so that in this case $H^r_{rt} = 0$ and $H^r_{r\theta} = r_\theta / r$; if instead we take $\varphi = d\theta$ and $v = \partial / \partial \theta$ then
\begin{equation*}
H^\theta_r = \vf{r} + \pd{\Gamma_{\theta t}}{r_\theta} \vf{r_t} + \tfrac{1}{2} \pd{\Gamma_{\theta\theta}}{r_\theta} \vf{r_\theta}
= \vf{r} + \frac{r_t}{r} \vf{r_t} - \frac{r_\theta}{r} \vf{r_\theta} \, ,
\end{equation*}
so now $H^r_{rt} = r_t / r$ and $H^r_{r\theta} = - r_\theta / r$. Writing $\Phi^{(t)}_{ij}$ and $\Phi^{(\theta)}_{ij}$ to indicate the choice of $\varphi$ and $v$ we find after some short calculations that
\begin{equation*}
\Phi^{(t)}_{tt} = 1 \, , \qquad \Phi^{(t)}_{t\theta} = \Phi^{(t)}_{\theta t} = \Phi^{(t)}_{\theta\theta} = 0 \, ,
\end{equation*}
and
\begin{equation*}
\Phi^{(\theta)}_{tt} = \Phi^{(\theta)}_{t\theta} = \Phi^{(\theta)}_{\theta t} = 0 \, , \qquad \Phi^{(\theta)}_{\theta\theta} = 4 \, .
\end{equation*}
These curvature components will be of use in the investigation of singularity formation in future work.

\section*{Appendix: curvature calculations}

We consider first the three-fold splitting~(B) with $TJ^1\pi = \dist_- \oplus D_\Gamma \oplus V\pi_{1,0}$, with projectors $\horproj$, $\pdeform$ and $\vertproj$. The various \FN\ brackets are given in general coordinates by
\begin{align*}
\fnb{\pdeform, \pdeform} & = R^\Gamma = dx^i \wedge dx^j \otimes [\Gamma_i, \Gamma_j] \\
\fnb{\pdeform, \horproj} & = \Phi + dx^i \wedge \psi^\sigma_i  \otimes H_\sigma \\
\fnb{\pdeform, \vertproj} & = - \Phi - dx^i \wedge \psi^\sigma_i \otimes H_\sigma  -  dx^i \wedge dx^j \otimes [\Gamma_i, \Gamma_j] \\
\fnb{\horproj, \horproj} & = \omega^\sigma \wedge \omega^\nu \otimes [H_\sigma, H_\nu]  - 2 dx^i \wedge \psi^\sigma_i \otimes H_\sigma \\
\fnb{\horproj, \vertproj} & =  - \Phi  -   \omega^\sigma \land \omega^\nu \otimes  [H_\sigma, H_\nu]  + dx^i \wedge \psi^\sigma_i  \otimes H_\sigma \\
\fnb{\vertproj, \vertproj} & = 2 \Phi + dx^i \wedge dx^j \otimes [\Gamma_i, \Gamma_j]   + \omega^\nu \wedge \omega^\rho \otimes  [H_\nu, H_\rho] \, ,
\end{align*}
where the Lie brackets of the vector fields in the expressions above are given by
\begin{align*}
[\Gamma_i, \Gamma_j] & = \bigl( \Gamma_i (F^\nu_{jk}) - \Gamma_j(F^\nu_{ik}) \bigr) \vf{y^\nu_k} \\
[H_\sigma, H_\nu] & = \bigl( H_\sigma (H^\rho_{\nu k}) - H_\nu(H^\rho_{\sigma k} ) \bigr) \vf{y^\rho_k} \\
[\Gamma_i, H_\sigma] & = -  H^\nu_{\sigma i}H_\nu + \Phi _{i \sigma} \\
\biggl[ \Gamma_i, \vf{y^\sigma_j} \biggr] 
& = - \delta_i^j H_\sigma + \biggl( \delta_i^j H_{\sigma k}^{\nu}  - \pd{F^\nu_{ik}}{y^\sigma_j} \biggr) \vf{y^\nu_k} \\
\biggl[ H_\sigma, \vf{y^\nu_j} \biggr] & = - \pd{H^\rho_{\sigma k} }{y^\nu_j}  \vf{y^\rho_k}
\end{align*}
and where
\begin{equation*}
\Phi = dx^i \wedge \omega^\sigma \otimes \Phi_{i\sigma} = \Phi^\nu_{i \sigma j} dx^i \wedge \omega^\sigma \otimes \vf{y^\nu_j} \, , \qquad
\Phi^\nu_{i \sigma j} = H^\rho_{\sigma i} H^\nu_{\rho j} + \Gamma_i(H^\nu_{\sigma j}) - H_\sigma(F^\nu_{ij})
\end{equation*}
is the Jacobi curvature.

For the four-fold splitting~(AB) we use adapted coordinates with $\varphi = dx^1$ and projectors $\horproj$, $\pdeform$, $\vertprojt$, $\vertproj_+$. The Jacobi curvature may be decomposed into two partial Jacobi curvatures
\begin{equation*}
\Phi = \Phit + \Phi_+ = (\vertprojt \circ \Phi) + (\vertproj_+ \circ \Phi) 
\end{equation*}
where
\begin{align*}
\Phit & = \Phi^\nu_{i \sigma p} dx^i \wedge \omega^\sigma \otimes W^p_\nu \, , \qquad W^p_\nu = \vf{y^\nu_p} - v^p \vf{y^\nu_1}\\
\Phi_+ & = v^j \Phi^\nu_{i \sigma j}  dx^i \wedge \omega^\sigma \otimes \vf{y^\nu_1} \, .
\end{align*}
In a similar way we may decompose the curvature of the second order connection as
\begin{equation*}
R^\Gamma = \Rt^\Gamma + R_+^\Gamma = (\vertprojt \circ R^\Gamma) + (\vertproj_+ \circ R^\Gamma) 
\end{equation*}
where
\begin{align*}
\Rt^\Gamma & = [\Gamma_i, \Gamma_j]^\nu_p \, dx^i \wedge dx^j  \otimes W_\nu^p \\
R^\Gamma_+ & = v^k [\Gamma_i, \Gamma_j]^\nu_k \, dx^i \wedge dx^j  \otimes  \vf{y^\nu_1} \, .
\end{align*}
Also, writing
\begin{equation*}
R^H = \vertproj \circ  [\horproj, \horproj] = \omega^\sigma \wedge \omega^\rho \otimes [H_\sigma, H_\rho] \, , 
\end{equation*}
we may put
\begin{equation*}
R^H = \Rt^H + R_+^H = (\vertprojt \circ R^H) + (\vertproj_+ \circ R^H) 
\end{equation*}
giving
\begin{align*}
\Rt^H & = [H_\sigma, H_\rho]^\nu_p \, \omega^\sigma \wedge \omega^\rho \otimes W_\nu^p \\
R^H_+ & = v^k [H_\sigma, H_\rho]^\nu_k \, \omega^\sigma \wedge \omega^\rho \otimes  \vf{y^\nu_1} \, .
\end{align*}
We also have
\begin{equation*}
\vertproj_+ \circ \fnb{\pdeform, \vertproj_+} = -R^\Gamma_+ - \Phi_+ - r_+
\end{equation*}
where, as before, $r_+ = \vertproj_+ \circ \fnb{\pdeform, \vertprojt}$, so that
\begin{equation*}
r_+ = \biggl( v^k \biggl(v^p \pd{F^\sigma_{ik}}{y^\nu_1}  - \pd{F^\sigma_{ik}}{y^\nu_p}  
-  (v^p \delta^1_i - \delta^p_i) H_{\nu k}^\sigma \biggr)  - \delta^\sigma_\nu \pd{v^p}{x^i} \biggr) dx^i \wedge \psi^\nu_p \otimes \vf{y^\sigma_1} \, .
\end{equation*}

\section*{Acknowledgements}

Research supported by grant no 14-02476S `Variations, Geometry and Physics' of the Czech Science Foundation and IRSES project GEOMECH (EU FP7, nr 246981). Two of us (DS, OR) wish to acknowledge the hospitality of the Australian Mathematical Sciences Institute.

\section*{References}

\end{document}